\newtheorem{result}[theorem]{Result}
\newtheorem{prop}[theorem]{Proposition}
\newtheorem{rem}[theorem]{Remark}
\def\cF{\mathcal F}
\def\cH{\mathcal H}
\def\cX{\mathcal X}
\def\cY{\mathcal Y}
\def\PG{{\rm{PG}}}
\def\fq{{\mathbb F}_q}
\def\Fs{{\mathbb F}_{q^2}}
\newcommand{\PGU}{\mbox{\rm PGU}}
\newcommand{\aut}{\mbox{\rm Aut}}
\newcommand{\go}{\omega}
\newcommand{\ha}{{\textstyle\frac{1}{2}}}
\title{Galois subcovers  of the Hermitian curve in characteristic $p$ with respect to subgroups of order $dp$ with $d\not=p$ prime}
\author{Arianna Dionigi\inst{1} \and Barbara Gatti\inst{2}}
\institute{Università di Firenze\
\email{arianna.dionigi@unifi.it}
\and
Università del Salento\
\email{barbara.gatti@unisalento.it}}
\begin{document}

\maketitle

\begin{abstract}
A problem of current interest, also motivated by applications to Coding theory, is to find explicit equations for \textit{maximal} curves, that are
projective, geometrically irreducible, non-singular curves defined over a finite field $\mathbb{F}_{q^2}$ whose number of $\mathbb{F}_{q^2}$-rational points attains the Hasse-Weil upper bound of $q^2+2\mathfrak{g}q+1$ where $\mathfrak{g}$ is the genus
of the curve $\mathcal{X}$. For curves which are Galois covered of the Hermitian curve, this has been done so far ad hoc, in particular  in the cases where the Galois group has prime order and also when has order the square of the characteristic. In this paper we obtain explicit equations of all Galois covers of the Hermitian curve with Galois group of order $dp$ where $p$ is the characteristic of $\mathbb{F}_{q^2}$ and $d$ is prime other than $p$. We also compute the generators of the Weierstrass semigroup at a special $\mathbb{F}_{q^2}$-rational point of some of the curves, and discuss some possible positive  impacts on the minimum distance problems of AG-codes.
\end{abstract}
\vspace{0.5cm}\noindent {\em Keywords}:
maximal curves, function fields, Galois cover, Weierstrass semigroup, AG-code
\vspace{0.2cm}\noindent

\vspace{0.5cm}\noindent {\em Subject classifications}:
\vspace{0.2cm}\noindent  14H37, 14H05.

\section{Introduction}

Curves with many points over a finite field have intensively been investigated also by their connections to Coding theory, Cryptography, Finite geometry, and shift register sequences. In this context, the most important family consists of the maximal curves, that is, curves defined over the finite field $\mathbb{F}_{q^2}$, where $q=p^h$ and $p$ is its charactreristic, which attain the famous Hasse-Weil upper bound. 
The Hermitian curve is the best known maximal curve and it is also the most useful for applications, especially in the study of algebraic geometry codes, shortly AG-codes.
Actually, many other maximal curves derive from the Hermitian curve since any $\mathbb{F}_{q^2}$-subcover of a maximal curve is still maximal over the same field. If such a  $\mathbb{F}_{q^2}$-subcover is a Galois subcover with Galois group $G$ then the arising curve is named the quotient curve of the Hermitian curve with respect to $G$.
Up to group isomorphism, the $\mathbb{F}_{q^2}$-automorphism group of the Hermitian curve is the $3$-dimensional projective unitary group $\PGU(3,q)$ which has plenty of subgroups; see \cite{hoffer1972}. This motivated the systematic study of the quotients curves of the Hermitian curve  which was eventually initiated in the seminal paper of Garcìa, Stichtenoth and Xing \cite{GSX}.  Ever since important progress has been made in the study of the spectrum of the possible genera of the quotients of the Hermitian curve over a given finite field; see \cite[Chapter 10]{HKT}. Nevertheless, the problem of determining explicit equations for such curves, which is a relevant issue for applications, remains largely open. In fact, this problem has so far been solved by ad hoc methods, apart form the cases where the Galois group has either prime order; see \cite{cossidente-korchmaros-torres2000}, or its  order equals the square of the characteristic; see \cite{GK}.\\
In this paper we determine explicit equations for each quotient curve of the Hermitian curve whose Galois group has order $dp$ where $p$ is the characteristic of  $\mathbb{F}_{q^2}$ and $d$ is prime other than $p$. We also compute the Weierstrass semigroup at some $\mathbb{F}_{q^2}$-rational point of those curves, and discuss possible positive impacts on the minimum distance problems of AG-codes.
\begin{theorem}
\label{th1} In the $\mathbb{F}_{q^2}$-automorphism group $G\cong \PGU(3,q)$  of the Hermitian curve $\cH_q$ defined over $\mathbb{F}_{q^2}$ with $q=p^h$  and $p\ge 5$, let $H$ be a subgroup of order $dp$ where $d\ge 5$ is a prime number other than $p$. Let $\bar{\cH}_q=\cH_q/H$ be the quotient curve of $\cH_q$ with respect to the subgroup $H$. Then, up to an $\mathbb{F}_{q^2}$-isomorphism, one of the following cases occurs. 
\begin{itemize}
\item[(I)] If $H=C_p\times C_d$ then $\bar{\cH}_q$ has genus  $$\mathfrak{g}=\frac{1}{2d}(q-d+1)\left (\frac{q}{p}-1\right )$$ and equation
\begin{equation}
\label{eqthI}
\sum_{i=0}^{h-1}Y^{p^{i}} +\go X^{\nicefrac{(q+1)}{d}}=0\quad with\quad \omega^{q-1}=-1\quad and\quad d\mid (q+1).
\end{equation}
\item[(II)] If $H=C_p\rtimes C_d$ and $C_p$ is in the center in a Sylow $p$-subgroup of $G$, then $\bar{\cH}_q$ has genus $$\mathfrak{g}=\frac{1}{2} \frac{q}{d}\left (\frac{q}{p}-1\right )$$ and equation 
\begin{equation}
\label{eqthII}
\go X^{\nicefrac{(q-1)}{d}}-A(X,Y)=0 \quad with\quad \omega^{q-1}=-1\quad and \quad d\mid (p-1).
\end{equation}
where 
$$A(X,Y)=Y+X^{\nicefrac{2(p-1)}{d}}Y^p+\cdots+X^{\nicefrac{2(p^{h-1}-1)}{d}}Y^{\nicefrac{q}{p}}.$$
\item[(III)] If $H=C_p\rtimes C_d$ but $C_p$ is not in the center in a Sylow $p$-subgroup of $G$, then $\bar{\cH}_q$ has genus $$\mathfrak{g}=\frac{q}{2dp}(q-1)$$ and equation 
\begin{equation}
\label{eqthIII}
\left (\frac{Y^2}{X^d}\right )^{\nicefrac{(q-1)}{d}}+1-A(X,Y)=0\quad d\mid (p-1)
\end{equation}
where 
\begin{equation*}
    A(X,Y)=\sum_{i=0}^{h-1}\sum_{j=0}^{h-1}\left(\frac{Y^2}{X^d}\right)^{\nicefrac{(p^i-1)}{2d}}\left(\frac{Y^2}{X^d}\right)^{\nicefrac{(p^j-1)}{2d}}X^{\nicefrac{(p^i+p^j)}{2}}.
\end{equation*}
\end{itemize}
\end{theorem}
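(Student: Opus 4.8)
The plan is to combine a conjugacy classification of the subgroups $H\le\PGU(3,q)$ of order $dp$ with an explicit two‑step computation of the fixed field $\mathbb{F}_{q^2}(\cH_q)^{H}$, carried out through the normal Sylow $p$-subgroup of $H$. A counting argument shows that a group of order $dp$ has a normal Sylow subgroup, and in $\PGU(3,q)$ a normal Sylow $d$-subgroup forces $C_p$ normal as well: a $p$-element is unipotent and fixes the point $P_\infty$ stabilised by the unique Sylow $p$-subgroup $S_p$ containing it, so a semisimple $C_d$ that it normalises also fixes $P_\infty$, hence (up to conjugacy in the Borel $S_p\rtimes T$) lies in the torus $T$, and --- using $p,d\ge 5$ --- no unipotent of $S_p$ can conjugate a torus element of order $d$ to a nontrivial power of itself, so $C_d$ is centralised. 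Thus $H=C_p\rtimes C_d$ with $C_p=\langle\sigma\rangle$, $\sigma$ unipotent, and $C_d$ generated by $(x,y)\mapsto(\mu x,\mu^{q+1}y)$; the three cases of the statement are $\sigma\in Z(S_p)$ with $[\sigma,C_d]=1$ (then $\mu^{q+1}=1$, so $d\mid q+1$), $\sigma\in Z(S_p)$ with nontrivial action (then $\mu^{q+1}\in\mathbb{F}_p^*$, so $d\mid p-1$), and $\sigma\notin Z(S_p)$ (then $\mu\in\mathbb{F}_p^*$, so $d\mid p-1$, and for a normalising $\mu$ to exist $\sigma$ is conjugate to $(x,y)\mapsto(x+1,\,y+x+\tfrac12)$). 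Since $T$ acts transitively on the $\mathbb{F}_p$-lines of $Z(S_p)\cong(\mathbb{F}_q,+)$, each case is one conjugacy class and $\bar{\cH}_q$ is determined up to $\mathbb{F}_{q^2}$-isomorphism.

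Next I compute $\mathbb{F}_{q^2}(\cH_q)^{C_p}$ in the model $\mathbb{F}_{q^2}(\cH_q)=\mathbb{F}_{q^2}(x,y)$, $y^q+y=x^{q+1}$. In cases~(I)--(II), with $C_p=\langle y\mapsto y+\beta\rangle$, $\beta^q+\beta=0$, and $\eta(T):=\prod_{c\in\mathbb{F}_p}(T-c\beta)=T^p-\beta^{p-1}T$, the function $z:=\eta(y)$ generates $\mathbb{F}_{q^2}(\cH_q)^{C_p}=\mathbb{F}_{q^2}(x,z)$ and satisfies $\bar{\wp}(z)=x^{q+1}$, where $\bar{\wp}$ is the additive polynomial with $\bar{\wp}\circ\eta=\wp$, $\wp(T)=T^q+T$. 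The arithmetic core is the Frobenius-telescoping identity $\bar{\wp}(\beta^{p}T)=\beta^{q}\sum_{i=0}^{h-1}T^{p^{i}}$, which holds precisely because $\beta^{q-1}=-1$; setting $Y_0:=\beta^{-p}z$ it turns the relation into $\sum_{i=0}^{h-1}Y_0^{p^{i}}=\beta^{-q}x^{q+1}$, i.e. $\sum Y_0^{p^i}+\omega\,x^{q+1}=0$ with $\omega=-\beta^{-q}$, and as $\beta$ varies $\omega$ runs over all elements with $\omega^{q-1}=-1$. In case~(III), with $\sigma:(x,y)\mapsto(x+1,y+x+\tfrac12)$, the functions $u:=y-\tfrac12x^2$, $v:=x^p-x$ are $C_p$-invariant and generate $\mathbb{F}_{q^2}(\cH_q)^{C_p}=\mathbb{F}_{q^2}(u,v)$, and using $x^q=x+\sum_{i=0}^{h-1}v^{p^{i}}$ together with the curve equation one obtains $u^q+u=-\tfrac12\bigl(\sum_{i=0}^{h-1}v^{p^{i}}\bigr)^{2}$ --- the perfect square on the right being the source of the double sum $A(X,Y)$.

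Finally I descend the tame quotient by $\bar{C}_d$ and compute the genus. On the coordinates above $\bar{C}_d$ acts by scalings: $x\mapsto\mu x$, $z\mapsto\mu^{q+1}z$ ($z$ fixed when $C_d$ centralises $C_p$), and $u\mapsto\mu^{q+1}u$, $v\mapsto\mu v$ in case~(III). Choosing weight-matched invariant generators --- $X=x^d$, $Y=Y_0$ in (I); $X=x^d$, $Y=\beta^{-p}x^{-2}z$ in (II); $X=v^2/u'$, $Y=v^d$ with $u':=-2u$ in (III) --- and checking $[\mathbb{F}_{q^2}(X,Y):\mathbb{F}_{q^2}(X)]=q/p$ (so that they generate the whole invariant field), substitution into the relations of the previous step gives exactly \eqref{eqthI}, \eqref{eqthII}, \eqref{eqthIII}; in case~(III) one uses $A(X,Y)=\bigl(\sum_i(Y^2/X^d)^{(p^i-1)/2d}X^{p^i/2}\bigr)^2$, the rewriting of $\bigl(\sum_i v^{p^i}\bigr)^2/u'$. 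For the genus I apply Riemann--Hurwitz to the tower $\cH_q\to\cH_q/C_p\to\bar{\cH}_q$: the first map is wildly ramified only over $P_\infty$, and from the known ramification filtration of $S_p$ there ($G_0=G_1=S_p$, $G_2=\cdots=G_{q+1}=Z(S_p)$, $G_{q+2}=1$) one gets $\mathfrak{g}(\cH_q/C_p)=\tfrac{q}{2}\bigl(\tfrac{q}{p}-1\bigr)$ in cases~(I)--(II) and $\tfrac{q(q-1)}{2p}$ in case~(III); the second map is tame, with $q/p+1$ branch points in case~(I) (the $q$ points with $x=0$, falling into $q/p$ orbits, together with $P_\infty$) and exactly two in cases~(II)--(III) (the images of $P_\infty$ and of the origin), and Riemann--Hurwitz then yields the three stated genera. (Equivalently, the genus can be read off each displayed equation directly, viewed as an elementary-abelian $p$-extension, respectively an Artin--Schreier extension, of the rational field $\mathbb{F}_{q^2}(X)$.)

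The genuinely hard part is the explicit-equation step: proving the coefficient collapse of the auxiliary additive polynomial $\bar{\wp}$ (and, in case~(III), that the Artin--Schreier right-hand side is already a perfect square), identifying the correct conjugate of $C_p$ and the non-monomial invariant $X=v^2/u'$ that disentangles the $\bar{C}_d$-scaling in case~(III), and verifying that the chosen $X,Y$ generate $\mathbb{F}_{q^2}(\bar{\cH}_q)$ rather than a proper subfield. Case~(III) is the most delicate throughout, since there the Sylow generator is not a translation and so $\mathbb{F}_{q^2}(\cH_q)^{C_p}$ is itself an Artin--Schreier extension of a rational field with a square on the right-hand side.
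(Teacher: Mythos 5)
Your proposal follows the same overall route as the paper: classify the order-$dp$ subgroups up to conjugacy into the three types inside the stabiliser of $Y_\infty$, quotient first by the normal $C_p$ (recovering the two subcovers of Result \ref{ckt}) and then by the induced tame $C_d$ via explicit invariant generators, and finish with the genus. The differences are in execution, and they check out. You stay in the model $y^q+y=x^{q+1}$ and split $T^q+T$ through $\eta(T)=T^p-\beta^{p-1}T$; your telescoping identity $\bar{\wp}(\beta^pT)=\beta^q\sum_i T^{p^i}$ is correct exactly because $\beta^{q-1}=-1$, whereas the paper simply changes canonical form to $y^q-y+\omega x^{q+1}=0$ so that $\eta=y^p-y$ telescopes with no computation --- same content, yours a bit heavier. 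Your Riemann--Hurwitz computation through the tower (with $q/p+1$ tame branch points in case (I) and two in cases (II)--(III)) reproduces the three stated genera; the paper instead quotes GSX/HKT. In case (III) your normalisation $u'=-2u$ together with $X=v^2/u'$, $Y=v^d$ gives \eqref{eqthIII} exactly, and is in fact cleaner than the paper's own derivation, which silently drops the factor $-\tfrac12$ when passing from (\ref{eqiib}) to ``$\eta^q+\eta=\eta A(\iota,\nu)$'' (harmless up to rescaling $\eta$, which is precisely what your $u'$ does). Your generation argument (degree $q/p$ over $\mathbb{F}_{q^2}(X)$, hence index $d$, hence equality with the fixed field) is a legitimate variant of the paper's ``index at most $d$ and $d$ prime'' argument.

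The one step you should expand is the classification when $p<d$ and the Sylow $d$-subgroup is normal: you assert that a $C_d$ normalised by the $p$-element ``also fixes $P_\infty$'', but that is exactly the nontrivial point --- a priori $C_d$ could be fixed-point-free on $\cH_q(\mathbb{F}_{q^2})$ (for instance $d\mid q^2-q+1$, or $d\mid q+1$ with a fixed triangle off the curve), and the paper rules this out by showing $H$ would then lie in a maximal subgroup of order $6(q+1)^2$ or $3(q^2-q+1)$, impossible for $p\ge 5$. Your sketch can be repaired along your own lines: $\sigma$ permutes the fixed-point set of $C_d$ in $\PG(2,\overline{\mathbb{F}}_q)$; if that set is a triangle, then $\sigma$, having order $p\ge 5>3$, fixes its three vertices and would be diagonalisable, contradicting that it is a nontrivial unipotent; hence the generator of $C_d$ is a homology whose axis is a chord of the curve, and since $q+1\equiv 1 \pmod p$ the unipotent $\sigma$ fixes one of the $q+1$ curve points of that chord, which must be $P_\infty$, so $C_d$ fixes $P_\infty$. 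As written, though, the step is an assertion rather than an argument, and it is where most of the paper's classification proof actually lives.
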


Our Notation and terminology are standard; see \cite{HKT,stich1993,huppertI1967}. In particular, $q$ always stands for a power of $p$, namely $q=p^h$. We mostly use the language of function field theory rather than that of algebraic geometry.

\section{Background}

Let $\cX$ be a projective, non-singular, geometrically irreducible, algebraic curve of genus $\mathfrak{g}\geq 2$ embedded in an $r$-dimensional projective space $\PG(r,\mathbb{F}_\ell)$ over a finite field of order $\ell$ of characteristic $p$.  Let $\mathbb{F}_\ell(\cX)$ be the function field of $\cX$ which is an algebraic function field of transcendency degree one with constant field $\mathbb{F}_\ell$. As it is customary, $\cX$ is viewed as a curve defined over the algebraic closure $\mathbb{F}$ of $\mathbb{F}_\ell$. Then the function field $\mathbb{F}(\cX)$ is the constant field extension of $\mathbb{F}_\ell(\cX)$ with respect to field extension $\mathbb{F}|\mathbb{F}_\ell$. The automorphism group $\aut(\cX)$ of $\cX$ is defined to be the automorphism group of $\mathbb{F}(\cX)$ fixing every element of $\mathbb{F}$. It has a faithful permutation representation on the set of all points $\cX$ (equivalently on the set of all places of $\mathbb{F}(\cX))$. The automorphism group $\aut_{\mathbb{F}_\ell}(\cX)$ of $\mathbb{F}_\ell(\cX)$ is a subgroup of $\aut(\cX)$. In particular, the action of $\aut_{\mathbb{F}_\ell}(\cX)$ on the $\mathbb{F}_\ell$-rational points of $\cX$ is the same as on the set of degree $1$ places of $\mathbb{F}_\ell(\cX)$.
Let $G$ be a finite subgroup of $\aut_{\mathbb{F}_{\ell}}(\cX)$. The \emph{Galois subcover} of $\mathbb{F}_\ell(\cX)$ with respect to $G$ is the fixed field of $G$, that is, the subfield ${\mathbb{F}_{\ell}}(\cX)^G$ consisting of all elements of $\mathbb{F}_{\ell}(\cX)$ fixed by every element in $G$. Let $\cY$ be a non-singular model of ${\mathbb{F}_{\ell}}(\cX)^G$, that is,
a projective, non-singular, geometrically irreducible, algebraic curve with function field ${\mathbb{F}_{\ell}}(\cX)^G$. Then $\cY$ is the \emph{quotient curve of $\cX$ by $G$} and is denoted by $\cX/G$. The covering $\cX\mapsto \cY$ has degree equal to $\mid G\mid$ and the field extension $\mathbb{F}_{\ell}(\cX)|\mathbb{F}_{\ell}(\cX)^G$ is Galois.
If $P$ is a point of $\cX$, the stabilizer $G_P$ of $P$ in $G$ is the subgroup of $G$ consisting of all elements
fixing $P$.
\begin{result}\cite[Theorem 11.49(b)]{HKT}
\label{resth11.49b} All $p$-elements of $G_P$ together with the identity form a normal subgroup $S_P$ of $G_P$ so that $G_P=S_P\rtimes C$, the semidirect product of $S_P$ with a cyclic complement $C$.
\end{result}
\begin{result}\cite[Theorem 11.129]{HKT}
\label{resth11.129} If $\cX$ has zero Hasse-Witt invariant then every non-trivial element of order $p$ has a unique fixed point, and hence no non-trivial element in $S_P$ fixes a point other than $P$.
\end{result}
A useful corollary of Result \ref{resth11.129} is the following.
\begin{result}
\label{lem15042023} Let $\cX$ be an  $\mathbb{F}_\ell$-rational curve whose number of $\mathbb{F}_\ell$-rational points is $N\ge 2$. If $\cX$ has zero Hasse-Witt invariant and $S$ is a $p$-subgroup of $\aut_{\mathbb{F}_\ell}(\cX)$ then $S$ fixes a unique point and $|S|$ divides $N-1$.
\end{result}

The following result is due to Stichtenoth \cite{stichtenoth1973I}.
\begin{result} \cite[Theorem 11.78(i)]{HKT}
\label{sti1} Let $H$ be a $p$-subgroup of $\mathbb{F}(\cX)$ fixing a point. If $|S|$ is larger than the genus of $\mathbb{F}(\cX)$ then the Galois subcover of $\mathbb{F}(\cX)$ with respect to $H$ is rational.
\end{result}
 From now on let $\ell=q^2$ with $q=p^h$ and assume that $\cX$ is a $\mathbb{F}_{q^2}$-maximal curve.

  The following result follows from \cite[Lemma1]{RS}.
\begin{result}
\label{zeroprank} All $\mathbb{F}_{q^2}$-maximal curves have zero Hasse-Witt invariant.
\end{result}

The following result is commonly attributed to Serre.
\begin{result}\cite[Theorem 10.2]{HKT}
\label{resth10.2} For every subgroup $G$ of $\aut_{\mathbb{F}_{q^2}}(\cX)$, the quotient curve $\cX/G$ is also  $\mathbb{F}_{q^2}$-maximal.
\end{result}
We also use the classification of all groups whose order is the product of two distinct primes.
\begin{result}
\label{res07122023} Suppose $u$ and $v$ are distinct prime numbers with $u<v$. Then, there are two possibilities for groups $G$ of order $uv$:
\begin{itemize}
\item[(I)] If $u\nmid (v-1)$  then $G$ is a cyclic group.
\item[(II)] If $u\mid (v-1)$, then either $G$ is a cyclic group, or $G$ is a semidirect product $C_v \rtimes C_u$.
\end{itemize}
\end{result}

\subsection{The function field of the Hermitian curve}
In this subsection we collect some useful results about the function field of the Hermitian curve and its Galois subcovers.
The affine equation $Y^q+Y=X^{q+1}$ of an $\mathbb{F}_{q^2}$-rational curve is the  usual canonical form of the Hermitian curve $\cH_q$ with function field is $\mathbb{F}_{q^2}(x,y)$ where $y^q+y-x^{q+1}=0$. The equation  $Y^q-Y+\omega X^{q+1}=0$ with $\omega\in \mathbb{F}_{q^2}$ such that $\omega^{q-1}=-1$ is another useful equation of $\cH_q$.
We exploit numerous known results on the $\mathbb{F}$-automorphism group $\aut(\mathbb{F}(\cH_q))$ of $\cH_q$. For more details, the Reader is referred to  \cite{hoffer1972,huppertI1967}.
\begin{result}\cite[Theorem 12.24(iv), Proposition 11.30]{HKT}
\label{sect12.3} $\aut(\mathbb{F}(\cH_q))=\aut(\mathbb{F}_{q^2}(\cH_q))\cong \PGU(3,q)$. Moreover, $\aut(\mathbb{F}(\cH_q))$ acts on the set of all $\mathbb{F}_{q^2}$-rational points of $\cH_q$ as $\PGU(3,q)$ in its natural doubly transitive permutation  representation of degree $q^3+1$ on the isotropic points of the unitary polarity of the projective plane $PG(2,\mathbb{F}_{q^2})$. 
\end{result}
The maximal subgroups of $\PGU(3,q)$ were determined by Mitchell in 1911, see Hoffer \cite{hoffer1972}. Let $S_p$ be a Sylow $p$-subgroup of $\aut(\mathbb{F}_{q^2}(\cH_q))=\PGU(3,q)$. From Results \ref{sect12.3}, it may be assumed up to conjugacy that the unique fixed point of $S_p$ is the point at infinity $Y_\infty$ of $\cH_q$. The following result describes the structure of the stabiliser of $Y_\infty$ in $\aut(\mathbb{F}_{q^2}(\cH_q))$.
\begin{result}
\label{struct} Let the Hermitian function field be given by its canonical form $\mathbb{F}_{q^2}(x,y)$ with $y^q+y-x^{q+1}=0$. Then the  stabiliser $G$ of $Y_\infty$ in $\aut(\mathbb{F}(\cH_q))$ consists of all maps
\begin{equation}
 \label{eq250523}
 \psi_{a,b,\lambda}:\,(x,y)\mapsto (\lambda x+a, a^q \lambda x +\lambda^{q+1}y +b)
 \end{equation}
 where
 \begin{equation}
 \label{eqA250523}
  a \in \mathbb{F}_{q^2},\,\,
 \lambda\in \mathbb{F}_{q^2}^*,\,\,b^q+b=a^{q+1}.
 \end{equation}
 In particular, $G=S_p\rtimes C$ where $S_p=\{\psi_{a,b,1}| b^q+b=a^{q+1},a,b\in \mathbb{F}_{q^2}\}$ and $C=\{\psi_{0,0,\lambda}|\lambda\in \mathbb{F}_{q^2}^*\}$.
 \end{result}
 A direct computation by induction on $i$ shows that for $1\le i \le p$
 \begin{equation}
 \label{eq08122023} \psi_{a,b,1}^i=\psi_{ia,a^{q+1}(i^2-i)/2+ib,1}.
 \end{equation}
 For more about Result \ref{struct} see \cite{GSX}, Section 4.
 \begin{rem}
\label{rem250523} {\em{Changes of the generators $x,y$ of the Hermitian function field  $\mathbb{F}_{q^2}(x,y), y^q+y-x^{q+1}=0$ provide another canonical form. For our purpose, a useful change is $\tau:(x,y)\rightarrow (\omega x, -\omega y)$ where $\omega^{q-1}=-1$, and the arising canonical form is $y^q-y+\omega x^{q+1}=0$. }} 
Then the elements in the stabilizer $G$ of $Y_\infty$ in $\aut(\mathbb{F}(\cH_q))$ are of the form:
\begin{equation}
\label{eaQ08122023}
\varphi_{a,b,\lambda}:\,(x,y)\mapsto (\lambda x+a, a^q \lambda \omega x +\lambda^{q+1}y +b)
\end{equation}
where (\ref{eqA250523}) is replaced by 
$$ a\in \mathbb{F}_{q^2},\,\,\lambda\in \mathbb{F}_{q^2}^*,\,\,b^q-b=-\omega a^{q+1}.$$
A direct computation by induction on $i$ shows that for $1\le i \le p$
 \begin{equation}
 \label{eq081220232} \varphi_{a,b,1}^i=\varphi_{ia,a^{q+1}\omega(i^2-i)/2+ib,1}.
 \end{equation}
For more about Result \ref{rem250523} see \cite{GSX}, Section 4.
 \end{rem}
 
\noindent From Result \ref{struct}, $S_p$ is the (unique) Sylow $p$-subgroup of the stabiliser of $Y_\infty$ in $\aut(\mathbb{F}_{q^2}(\cH_q))$.
\begin{result}
\label{conjcl} $S_p$ has the following properties.
\begin{itemize}
\item[(I)] The center $Z(S_p)$  of $S_p$ has order $q$ and it consists of all maps $\psi_{0,b,1}$ with $b^q+b=0, b\in\mathbb{F}_{q^2}$. Also, $Z(S_p)$ is an elementary abelian group of order $p$.
\item[(II)]  The non-trivial elements of $S_p$ form two conjugacy classes in  the stabiliser of $Y_\infty$ in $\aut(\mathbb{F}_{q^2}(\cH_q))$, one comprises all non-trivial elements of $Z(S_p)$, the other does the remaining $q^3-q$ elements.
\item[(III)] The elements of $G$ other than those in $Z(S_p)$ have order $p$, or $p^2=4$ according as $p>2$ or $p=2$.
\end{itemize}
\end{result}

For completeness, we provide a proof for the classification of subgroups of $\PGU(3,q)$ of order $dp$. We use the canonical form $y^q-y+\omega x^{q+1}=0$ with $\omega^{q-1}=-1$.
The Galois subcovers of $\mathbb{F}_{q^2}(\cH_q)$ with respect to a subgroup $H$ of prime order or when its  order equals the square of the characteristic were thoroughly classified in \cite{cossidente-korchmaros-torres2000} and \cite{GK} respectively. For the case $|H|=dp$, the classification is reported in the following result.
\begin{theorem}\label{dp} 
Let $p$ and $d$ two distinct prime numbers both larger than $3$. Then, up to conjugacy in $\PGU(3,q)$, there exist at most three subgroups of order $dp$ in $\PGU(3,q)$, one is cyclic and the other two are semidirect products of $C_p\rtimes C_d$ with $p<d$.
They are subgroups of the stabiliser of $Y_\infty$ in $\aut(\mathbb{F}_{q^2}(\cH_q))$ where

\begin{enumerate}
\item[(I)] $G=\Sigma_p\times\Sigma_d$ with $\Sigma_p=\langle \varphi_{0,1,1}\rangle$ and $\Sigma_d=\langle \varphi_{0,0,\lambda}\rangle$ with $\lambda^d=1$, $d|(q+1)$;
\item [(II)] $G=\Sigma_p\rtimes\Sigma_d$ with $\Sigma_p=\langle \varphi_{0,1,1}\rangle$ and $\Sigma_d=\langle \varphi_{0,0,\lambda}\rangle$ with $\lambda^d=1$, $d|(p-1)$;
\item[(III)] $G=\Sigma_p\rtimes\Sigma_d$ with $\Sigma_p=\langle \varphi_{1,\omega/2,1}\rangle$ and $\Sigma_d=\langle \varphi_{0,0,\lambda}\rangle$ with $\lambda^d=1$, $d|(p-1)$.
\end{enumerate}
\end{theorem}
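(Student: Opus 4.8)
The plan is to push the whole problem into the stabiliser $G$ of $Y_\infty$ described in Result~\ref{struct}, and to classify there. First I would locate a $p$-element: since $|H|=dp$, $H$ contains an element $u$ of order $p$, and because $\cH_q$ is $\mathbb F_{q^2}$-maximal it has zero Hasse--Witt invariant (Result~\ref{zeroprank}), so by Result~\ref{resth11.129} the element $u$ fixes exactly one point of $\cH_q$. As $\PGU(3,q)$ is transitive on the points of $\cH_q$ (Result~\ref{sect12.3}) I conjugate so that this point is $Y_\infty$; then $u\in G$, and in fact $u\in S_p$ since $G/S_p$ is cyclic of order $q^2-1$, prime to $p$. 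By Result~\ref{conjcl}(II) the nontrivial elements of $S_p$ form two $G$-classes — those in $Z(S_p)$ and the rest — and, since $p>3$, all of them have order $p$ (Result~\ref{conjcl}(III)); checking with \eqref{eq081220232} that $\varphi_{1,\omega/2,1}$ has order $p$ and lies outside $Z(S_p)$, I conclude that up to conjugacy $\Sigma_p:=\langle u\rangle$ is either $\langle\varphi_{0,1,1}\rangle$ (the \emph{central} type) or $\langle\varphi_{1,\omega/2,1}\rangle$ (the \emph{non-central} type).

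The key structural step is to show $\Sigma_p\trianglelefteq H$, whence $H\le G$. By Sylow and Result~\ref{res07122023}, if $\Sigma_p\not\trianglelefteq H$ then $H\cong C_d\rtimes C_p$ is non-abelian with the Sylow $d$-subgroup $\Sigma_d=\langle v\rangle$ normal and $u$ normalising $\langle v\rangle$; I would rule this out. The element $v$ is semisimple of order $d>3$, so on $\PG(2,\overline{\mathbb F}_q)$ it fixes either three points in general position, or — if it is a homology — exactly one point $P$ together with the points of a line $m$, and the unitary geometry then forces $m=P^{\perp}$, $P$ non-isotropic, and $d\mid q+1$. Now $u$ permutes the fixed set of $\langle v\rangle$; in the first case, since $p\nmid 6$, the unipotent $u$ would fix all three points, impossible for a nontrivial unipotent element (whose fixed locus on the plane is a single point or a single line); in the second case $u(m)=m$ and $u(P)=P$, and one checks that either $P$ would be the unique plane-fixed point of a regular unipotent $u$, hence on $\cH_q$, contradicting $P$ non-isotropic, or $u$ is a transvection, in which case $u$ and $v$ both fix $P$ and the point $m\cap(\text{axis of }u)$, so $H$ fixes the axis of $u$ and therefore its tangency point with $\cH_q$. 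Thus in every case $H$ fixes a point of $\cH_q$; conjugating it to $Y_\infty$ gives $H\le G$, hence $H\cap S_p\trianglelefteq H$ (as $S_p\trianglelefteq G$), and $|H\cap S_p|=p$ (it is not $1$, else $H$ embeds in the cyclic $G/S_p$), so $H$ would have a normal Sylow $p$-subgroup — contradicting $H\cong C_d\rtimes C_p$. Therefore $\Sigma_p\trianglelefteq H$, and since $H$ then permutes $\mathrm{Fix}(\Sigma_p)$, whose intersection with $\cH_q$ is $\{Y_\infty\}$, we get $H\le G$.

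It then remains to classify, inside $G=S_p\rtimes C$, the subgroups $H=\Sigma_p\rtimes\Sigma_d$ of order $dp$ with $\Sigma_p$ in one of the two standard forms. Using the maps of \eqref{eaQ08122023} (a short computation shows that conjugation by $\varphi_{0,0,\lambda}$ sends $\varphi_{0,1,1}\mapsto\varphi_{0,\lambda^{q+1},1}$ and $\varphi_{1,\omega/2,1}\mapsto\varphi_{\lambda,\omega\lambda^{q+1}/2,1}$), I would compute $N:=N_G(\Sigma_p)$ and find that it has a normal Sylow $p$-subgroup with cyclic complement — of order $(q+1)(p-1)$ in the central case, of order $p-1$ in the non-central case. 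By Schur--Zassenhaus the $p'$-subgroup $\Sigma_d$ may be conjugated, by an element of $N$ (thus keeping $\Sigma_p$ fixed), into that complement, so $\Sigma_d=\langle\varphi_{0,0,\lambda_0}\rangle$ with $\lambda_0$ of order $d$. Reading off the action of $\varphi_{0,0,\lambda_0}$ on $\Sigma_p$ separates the cases: in the central case it is trivial exactly when $\lambda_0^{q+1}=1$, i.e. $d\mid q+1$, giving the cyclic group of (I), and otherwise $\lambda_0^{q+1}\in\mathbb F_p^{*}\setminus\{1\}$ forces $d\mid p-1$ and the non-abelian group of (II); in the non-central case the induced automorphism $u\mapsto u^{\lambda_0}$ is always nontrivial and $d\mid p-1$, giving (III). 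Since these three are pairwise non-conjugate when they occur (central versus non-central type is a conjugacy invariant by Result~\ref{conjcl}(II)), up to conjugacy there are at most the three stated subgroups.

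I expect the main obstacle to be the middle step — excluding $H\cong C_d\rtimes C_p$: this requires the classification of the plane fixed-point configuration of a semisimple element of order $d$ in $\PGU(3,q)$ (regular semisimple versus homology, together with the fact that homologies have order dividing $q+1$) combined carefully with the uniqueness of the fixed point of a $p$-element. By contrast the first and third steps are a matter of invoking the quoted results and carrying out the routine, if slightly lengthy, Schur--Zassenhaus and normaliser bookkeeping.
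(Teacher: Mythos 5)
Your proposal is correct in substance and reaches exactly the paper's classification, but its middle section follows a genuinely different route. The paper splits according to $p>d$ or $p<d$: for $p>d$ the Sylow $p$-subgroup is automatically normal, the second fixed point of $\Sigma_d$ is moved to $(0:0:1)$ by the double transitivity of $\PGU(3,q)$, and the commutation relations are then computed; for $p<d$ the normal subgroup is $\Sigma_d$, and the subcase in which $\Sigma_d$ fixes no point of $\cH_q(\mathbb{F}_{q^2})$ is excluded by showing that $G$ would stabilise a triangle and hence lie in a maximal subgroup of order $6(q+1)^2$ or $3(q^2-q+1)$, impossible since $p>3$ (this invokes the Mitchell--Hoffer list). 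You instead make ``$\Sigma_p\trianglelefteq H$'' the pivot in all cases, ruling out $H\cong C_d\rtimes C_p$ by confronting the fixed configuration of the semisimple generator $v$ (regular semisimple triangle, or homology with non-isotropic centre, polar axis and order dividing $q+1$) with the two unipotent types of $u$, and you then standardise $\Sigma_d$ inside $N_G(\Sigma_p)$ via Hall/Schur--Zassenhaus rather than via double transitivity. What your route buys is a uniform treatment without the $p\gtrless d$ dichotomy and independence from the maximal-subgroup classification: your triangle case is settled by the elementary remark that an element of order $p>3$ normalising $\langle v\rangle$ would fix three non-collinear points, which no nontrivial unipotent can do. The price is that you must prove or cite the standard facts about homologies and fixed configurations of semisimple elements of $\PGU(3,q)$, which you correctly flag as the delicate ingredient; your normaliser bookkeeping does check out (the $p'$-parts of the two normalisers are indeed cyclic of orders $(q+1)(p-1)$ and $p-1$, and $\gcd(p-1,q+1)\le 2<d$ makes the alternatives $d\mid q+1$ and $d\mid p-1$ mutually exclusive, so the three cases separate cleanly). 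Two small points to tighten: in the homology subcase with $u$ an elation, note that the axis of $u$ is a tangent line and hence distinct from the chord $m$, so that $m\cap(\text{axis of }u)$ really is a single point; and the final non-conjugacy remark (not needed for ``at most three'') should first conjugate a putative conjugating element into the stabiliser of $Y_\infty$, since Result~\ref{conjcl}(II) is stated for conjugacy inside that stabiliser.
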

\begin{proof}
Let $G$ be a subgroup of order $pd$ in $\PGU(3,q)$. Two cases are treated separately according as $p>d$ or $p<d$.
Assume first $p>d$. Then Result \ref{res07122023} shows that $G$ has a unique Sylow $p$-subgroup $\Sigma_p$. Moreover, $\Sigma_p$ is a normal subgroup of $G$, and hence $G=\Sigma_p\rtimes \Sigma_d$ where $\Sigma_d$ is a Sylow $d$-subgroup of $G$.  Since any non-trivial element of $\PGU(3,q)$ of order $p$ has exactly one fixed point on $\cH_q(\mathbb{F}_{q^2})$ whereas $\PGU(3,q)$ acts transitively on $\cH_q(\mathbb{F}_{q^2})$, we may assume, up to conjugacy in $\PGU(3,q)$, that $Y_\infty$ is the unique fixed point of $S_p$. As $S_p$ is a normal subgroup of $G$, the point $Y_\infty$ is also fixed by $\Sigma_d$. From $|\cH_q(\mathbb{F}_{q^2})|-1=q^3$, $\Sigma_d$ must have a fixed point $O\in \cH_q(\mathbb{F}_{q^2})$ other than $Y_\infty$. Since $\PGU(3,q)$ is doubly transitive on $\cH_q(\mathbb{F}_{q^2})$ we may assume, up to conjugacy, that $O=(0:0:1)$. Then $\Sigma_d$ is generated by $t=\varphi_{0,0,\lambda}$ with $\lambda^d=1$ where $d|(q^2-1)$. Furthermore, as $\Sigma_p$ is a subgroup of the Sylow subgroup $S_p$ of $\PGU(3,q)$ fixing $Y_\infty$, two cases arise according as $\Sigma_p$ is in the center $Z(S_p)$ of $S_p$ or not. Let $s$ be a generator of $\Sigma_p$.
If $s\in Z(S_p)$ then $s=\varphi_{0,b,1}$ with $b^q-b=0$. Take $\mu\in \mathbb{F}_{q^2}^*$ such that $\mu^{q+1}=b^{-1}$. Then the conjugate of $s$ by $\varphi_{0,0,\mu}$ is $\varphi_{0,0,1}$ while $t$ and $\varphi_{0,0,\mu}$ commute. Therefore, up to conjugacy, $G=\Sigma_p\rtimes \Sigma_d$ with $\Sigma_p=\langle s \rangle$ and $s=\varphi_{0,0,1}$ whereas $\Sigma_d$ and $t$ are as before.
Since $\Sigma_p$ is a normal subgroup of $G$, there exists $i$ with $1\le i \le p-1$ such that $st=ts^i$. A straightforward computation shows that this occurs if and only if $i=1/\lambda^{q+1}$. For $d|(q+1)$, this implies $i=1$, thus $G$ is cyclic and Case (I) occurs. For $d|(q-1)$, we have $i\neq 1$ and hence $G$ is not abelian. From Result \ref{res07122023}, $d|(p-1)$.  Thus Case (II) occurs.
If $s\not\in Z(S_p)$ then $s=\varphi_{a,b,1}$. For $\mu=a^{-1}$, the conjugate of $s$ by $\varphi_{0,0,\mu}$ is $\varphi_{1,b/a^{q+1},1}$ while $t$ and $\varphi_{0,0,\mu}$ commute. Therefore, up to conjugacy, we may  assume $G=\Sigma_p\rtimes \Sigma_d$ where $\Sigma_p=\langle s \rangle$ and $s=\varphi_{1,b/a^{q+1},1}$ while $\Sigma_d$ and $t$ are not changed. Then $st=\phi_{1,b/a^{q+1},\lambda}$ and,  from (\ref{eq08122023}), $ts^i=\phi_{\lambda i,\lambda^{q+1}(\omega (i^2-i)/2+ib/a^{q+1}),\lambda}$. Therefore, $st=ts^i$ if and only if $\lambda i=1$ and $\lambda^{q+1}(\ha \omega(i^2-i)+ib/a^{q+1})=b/a^{q+1}$. The latter condition can also be written as $\ha \omega(i^2-i)=(i^2-i)b/a^{q+1}$, that is, $b=\ha \omega a^{q+1}$ as $\lambda \neq 1$. Therefore, $st=ts^i$ if and only if $s=\varphi_{1,\omega/2,1}$ and $i\lambda=1$. In particular, $G$ is not abelian, and $d|(p-1)$.   This gives Case (III).
Now, assume $p<d$. Then a Sylow $d$-subgroup $\Sigma_d$ of $G$ is a normal subgroup of $G$, and hence $\Sigma_d$ is the unique $d$-subgroup of $G$. As $d$ divides the order of $\PGU(3,q)$, either $d|(q-1)$, or $d|(q+1)$, or $d|(q^2-q+1)$.
Assume that $\Sigma_d$ fixes a point on $\cH_q(\mathbb{F}_{q^2})$. Then $\Sigma_d$ has at least two fixed points, as $|\cH_q(\mathbb{F}_{q^2})|-1$ equals $q^3$. 
Up to conjugacy, we may assume that $\Sigma_d$ fixes $Y_\infty$ and $O$. Then $\Sigma_d=\langle \varphi_{0,0,\lambda}\rangle$ with $\lambda^d=1$. If $\lambda^{q+1}\neq 1$ then $\Sigma_d$ has no any further fixed point, and hence $G$ preserves the pair $\{Y_\infty,O\}$. Since $p>2$ this yields that elements of $G$ of order $p$ fix two points on $\cH_q(\mathbb{F}_{q^2})$ which is not possible. Therefore, $\lambda^{q+1}=1$ and $d|(q+1)$.
This yields that $\Sigma_d$ fixes all points $P=(0,\eta)$ with $\eta^q-\eta=0$, i.e. with $\eta\in \mathbb{F}_q$. Since $\Sigma_d$ is a normal subgroup of $G$, this yields that a generator $s$ of $\Sigma_p$ takes $O$ to a point $P=(0,\eta)$ with $\eta\in\mathbb{F}_q$. But then $s=\varphi_{0,b,1}$ with $b\in \mathbb{F}_q^*$. For $\mu=b^{-1}$, the conjugate of $s$ by $\varphi_{0,0,\nu}$ with $\nu^{q+1}=\mu$ is $\varphi_{0,1,1}$ while a generator $t$ of $\Sigma_d$ and $\varphi_{0,0,\mu}$ commute. Therefore, up to conjugacy, we may  assume $s=\varphi_{0,1,1}$. Also, $st=ts$ and Case (I) occurs.
We are left with the case where $\Sigma_d$ fixes no point on $\cH_q(\mathbb{F}_{q^2})$. Then either $d|(q+1)$, or $d|(q^2-q+1)$. We look at the action of $\PGU(3,q)$ as a projective group of the plane $\PG(2,\mathbb{K})$ where $\mathbb{K}$ is an algebraic closure of $\mathbb{F}_{q^2}$. Then the Hermitian curve $\cH_q$ is left invariant by $\PGU(3,q)$. In particular, $\PGU(3,q)$ preserves both $\cH_q(\mathbb{F}_{q^2})$ and its complementary set in $\PG(2,\mathbb{F}_{q^2})$ whose size equals $q^4+q^2+1-(q^3+1)=q^2(q^2-q+1)$. Furthermore, $\cH_q(\mathbb{F}_{q^2})$ can also be viewed as the set of all isotropic points of a unitary polarity $\pi$ of $\PG(2,\mathbb{F}_{q^2})$.
If $d|(q+1)$ then $\Sigma_d$ fixes a point $R\in \PG(2,\mathbb{F}_{q^2})$ outside $\cH_q(\mathbb{F}_{q^2})$. Let $r$ be the polar line of $R$ w.r.t. $\pi$. Then $r$ is a chord of $\cH_q(\mathbb{F}_{q^2})$. Since $r$ has as many as $q(q-1)$ points other than those on $\cH_q(\mathbb{F}_{q^2})$, there are at least two fixed points on $r$ outside $\cH_q(\mathbb{F}_{q^2})$ under the action of $\Sigma_d$. Since $\Sigma_d$ does not fix $r$ pointwise, these two points, say $R_1, R_2$ are the only fixed points of $\Sigma_d$ on $r$. In particular, $\Sigma_d$ fixes the vertices of the triangle $RR_1R_2$. We show that no more point in  $\PG(2,\mathbb{F}_{q^2})$ is fixed by $\Sigma_d$. In fact, such a further fixed point $T$ of $\Sigma_d$ should lie on a side of the triangle, and that side would be fixed pointwise by $\Sigma_d$. But this is impossible in our case, since the sides of $RR_1R_2$ are chords of $\cH_q(\mathbb{F}_{q^2})$ whereas $\Sigma_d$ is supposed not to fix points on $\cH_q(\mathbb{F}_{q^2})$. Since $\Sigma_d$ is a normal subgroup of $G$, the triangle $RR_1R_2$ is left invariant by $G$. But then $G$ is a contained in a maximal subgroup of $\PGU(3,q)$ whose order equals $6(q+1)^2$. Since $p>3$, this is impossible. A similar geometric approach is used to rule out the other possibility, i.e. $d|(q^2-q+1)$. Look at the action of $\PGU(3,q)$ on $\PG(2,\mathbb{F}_{q^6})$. From $|\cH_q(\mathbb{F}_{q^6})|=q^6+1+q^4(q-1)$ and $|\cH_q(\mathbb{F}_{q^3})|=q^3+1$, the Hermitian curve $\cH_q$ has as many as $q^3(q+1)^2(q-1)$ points in $\PG(2,\mathbb{F}_{q^6})$ but not in $\PG(2,\mathbb{F}_{q^2})$. From $d|(q^2-q+1)$ and $d>3$, $\Sigma_d$ fixes a point $R\in \cH_q(\mathbb{F}_{q^6})$ not lying in $PG(2,\mathbb{F}_{q^2})$. The Frobenius collineation $\mathfrak{f}$ which sends the point $P=(a_1:a_2:a_3)$ to the point $P_{q^2}=\left (a_1^{q^2}:a_2^{q^2}:a_3^{q^2}\right )$ leaves $\cH_q(\mathbb{F}_{q^6})$ invariant. Since $\mathfrak{f}$ and $\Sigma_d$ commute, $\Sigma_d$ also fixes the points $R_{q^2}$ and $R_{q^3}$. Actually, $\Sigma_d$ does not fix another point, otherwise one of the sides, say $\ell$, of the triangle $R R_{q^2}R_{q^4}$ would be fixed by $\Sigma_d$ pointwise. Since $\mathfrak{f}$ takes $\ell$ to another side $r$ of $R R_{q^2}R_{q^4}$ and $\mathfrak{f}$ and $\Sigma_d$ commute, this would yield that $r$ is also fixed pointwise by $\Sigma_d$, which is impossible. As before, this implies that $G$ leaves the triangle invariant $RR_{q^2}R_{q^4}$
invariant. Therefore $G$ is a contained in a maximal subgroup of $\PGU(3,q)$ whose order equals $3(q^2-q+1)$. Since $p>3$, this is impossible.
\end{proof}

\begin{result}\cite[Theorem 5.74]{HKT}
\label{ckt} Let $H$ be a subgroup of $\aut(\mathbb{F}_{q^2}(\cH_q))$ of order $p$. The Galois subcover $\mathbb{F}_{q^2}(\cF'))$ of $\mathbb{F}_{q^2}(\cH_q)$ with respect to $H$ is $\mathbb{F}_{q^2}$-isomorphic to the function field $\mathbb{F}_{q^2}(\xi,\eta)$ where either \rm(I) or \rm(II) hold:
\begin{itemize}
\item[(I)] $\sum_{i=1}^{h}\eta^{\nicefrac{q}{p^i}}+\go \xi^{q+1}=0$ with $\omega^{q-1}=-1$, $\mathfrak{g}(\mathbb{F}_{q^2}(\cF'))=\ha q\left (\frac{q}{p}-1\right )$, and $H$ is in the center of a Sylow $p$-subgroup of $\aut(\mathbb{F}_{q^2}(\cH_q))$;
\item[(II)] $\eta^q+\eta -(\sum_{i=1}^h\ \xi^{\nicefrac{q}{p^i}})^2=0$ for $p>2$, $\mathfrak{g}(\mathbb{F}_{q^2}(\cF'))=\ha \frac{q}{p}(q-1)$, and $H$ is not in the center of a Sylow $p$-subgroup of $\aut(\mathbb{F}_{q^2}(\cH_q))$.
\end{itemize}
\end{result}
The following result is a corollary of \cite[Section 4]{GSX}.
\begin{result}
\label{generi}
Let $\mathfrak{g}$ be the genus of the Galois cover of $\mathbb{F}_{q^2}(\cH_q)$ with respect to a subgroup $G$ of $\aut(\mathbb{F}_{q^2}(\cH_q))$ of order $dp$. Let $S_p$ is a Sylow $p$-subgroup of  $\aut(\mathbb{F}_{q^2}(\cH))$ containing a subgroup $H$ of $G$ of order $p$. Then either
$$\mathfrak{g}=\frac{1}{2} \frac{q}{d}\left (\frac{q}{p}-1\right )\qquad\text{for}\qquad (d,q+1)=1,$$
or 
$$\mathfrak{g}=\frac{1}{2d}(q-d+1)\left (\frac{q}{p}-1\right )\qquad \text{for}\qquad (d,q+1)=d.$$
\end{result}

\section{Galois subcovers of $\mathbb{F}_{q^2}(\cH)$ of type (I) of Result (\ref{dp})}
\label{typeI}
As in Remark \ref{rem250523}, take $\mathbb{F}_{q^2}(\cH_q)$ in its canonical form $\mathbb{F}_{q^2}(x,y)$ with $y^q-y+\omega x^{q+1}=0$ and $\omega^{q-1}=-1$. The group $\Phi=\langle \varphi_{0,1,1}\rangle$ has order $p$, and it is contained in $Z(S_p)$. Let $\eta=y^p-y$ and $\xi=x$. Then $\varphi_{0,1,1}(\eta)=\varphi_{0,1,1}(y^p-y)=\varphi_{0,1,1}(y)^p-\varphi_{0,1,1}(y)=(y+1)^p-(y+1)=y^p-y=\eta$. Moreover, $y^q-y=Tr(\eta)$.
Since $\varphi_{0,1,1}$ fixes $\xi$, this shows that the Galois subcover $\mathbb{F}_{q^2}(\cF')$ of $\mathbb{F}_{q^2}(\cH_q)$ with respect to $\Phi$ is as in (i) of Result \ref{ckt}. That equation can also be written as
\begin{equation}
\label{eqiia}
\sum_{i=0}^{h-1}\ \eta^{p^i}+\go \xi^{q+1}=0.
\end{equation}
Take an element  $r\in \mathbb{F}_{q^2}$ with $r^{d}=1$. Then $\varphi_{0,0,r}$ commutes with $\varphi_{0,1,1}$. Therefore, if $d|(q+1)$ then $\varphi_{0,0,r}$ induces an automorphism $\varphi$ of $\mathbb{F}_{q^2}(\cF')$. More precisely, a straightforward computation shows that $\varphi$ is the map $\varphi:(\xi,\eta)\mapsto (r\xi,\eta)$.  Let $\Phi_{r}$ be the  $\fq$-automorphism group of $\mathbb{F}_{q^2}(\cF')$ generated by $\varphi$. Then
the Galois subcover  of $\mathbb{F}_{q^2}(\cH_q)$ with respect to $G$ of Result (\ref{dp}) of type (I) is the same as the Galois subcover $G_r$ of $\Fs(\cF')$ with respect to $\Phi_r$.

\begin{theorem}
\label{propiia}
The Galois subcover $G_r=\Fs(\zeta,\tau)$ of $\Fs(\cF')$ with respect to $\Phi_r$ has genus $$\mathfrak{g}=\frac{1}{2d}(q-d+1)(\frac{q}{p}-1)$$ and is given by
\begin{equation}
\label{eq1}
\sum_{i=0}^{h-1}\tau^{p^i} +\go \zeta^{\nicefrac{q+1}{d}}=0,\quad d\mid (q+1).
\end{equation}
\end{theorem}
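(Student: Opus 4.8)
The plan is to exhibit explicit generators $\zeta,\tau$ of the fixed field $\Fs(\cF')^{\Phi_r}$ and then read off the genus from a Riemann--Hurwitz computation for the degree-$d$ extension $\Fs(\cF')\,|\,G_r$. Starting from the equation \eqref{eqiia} for $\Fs(\cF')=\Fs(\xi,\eta)$ and the automorphism $\varphi:(\xi,\eta)\mapsto(r\xi,\eta)$ of order $d$, the obvious invariants are $\tau=\eta$ (which is already fixed) and $\zeta=\xi^{d}$; since $d\mid(q+1)$, raising \eqref{eqiia} to a suitable manipulation gives $\go^{d}\xi^{q+1}$ as a power of $\zeta$, more precisely $\xi^{q+1}=(\xi^{d})^{(q+1)/d}=\zeta^{(q+1)/d}$, so substituting into \eqref{eqiia} yields exactly \eqref{eq1}. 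I would then check that $\Fs(\zeta,\tau)$ is genuinely the full fixed field and not a proper subfield: $[\Fs(\xi,\eta):\Fs(\zeta,\eta)]=d$ because $\xi$ satisfies $T^{d}-\zeta=0$ over $\Fs(\zeta,\eta)$ and this polynomial is irreducible there (its roots are $r^{j}\xi$, all of which are conjugate under $\Phi_r$), hence $[\Fs(\cF'):G_r]=d=|\Phi_r|$ forces $G_r=\Fs(\cF')^{\Phi_r}$.

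Next I would compute the genus of $G_r$ by Riemann--Hurwitz applied to the cyclic Kummer-type extension $\Fs(\cF')\,|\,G_r$ of degree $d$. The genus $\gg'$ of $\Fs(\cF')$ is known from Result \ref{ckt}(I), namely $\gg'=\tfrac12 q(\tfrac{q}{p}-1)$. The ramification of $\varphi:(\xi,\eta)\mapsto(r\xi,\eta)$ occurs exactly at the places where $\xi=0$ or $\xi=\infty$: from \eqref{eqiia}, the locus $\xi=0$ forces $\sum\eta^{p^i}=0$, an equation with $\tfrac{q}{p}$ simple roots in $\eta$, each giving a place totally ramified under $\langle\varphi\rangle$ (since $\xi$ is a uniformizer there and $\varphi$ multiplies it by the primitive $d$-th root $r$), and similarly there is a single place over $\xi=\infty$ which is also totally ramified. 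Each such place is tamely ramified because $\gcd(d,p)=1$, contributing $d-1$ to the different exponent. Thus Riemann--Hurwitz reads $2\gg'-2=d(2\gg-2)+\big(\tfrac{q}{p}+1\big)(d-1)$, and solving for $\gg$ gives $\gg=\tfrac{1}{2d}\big(2\gg'-2-(\tfrac{q}{p}+1)(d-1)+2d\big)$; substituting $\gg'$ and simplifying should produce $\gg=\tfrac{1}{2d}(q-d+1)(\tfrac{q}{p}-1)$. As a consistency check I note this matches the value predicted by Result \ref{generi} in the case $(d,q+1)=d$, and also equals the genus asserted in part (I) of Theorem \ref{th1}.

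The main obstacle is the careful ramification analysis at $\xi=0$: one must verify that the number of places of $\Fs(\cF')$ lying over the place $\xi=0$ of $\Fs(\xi)$ is exactly $\tfrac{q}{p}$ (equivalently that $\sum_{i=0}^{h-1}T^{p^i}$ is separable, which it is, being an additive polynomial with nonzero linear term), that each is totally ramified under $\Phi_r$, and that there are no other ramified places (in particular that the extension is unramified away from $\xi\in\{0,\infty\}$, which follows since away from there $\xi^{1/d}$ generates an unramified Kummer extension). A secondary subtlety is confirming $\Phi_r$ has order exactly $d$ as an automorphism group of $\Fs(\cF')$, i.e. that $\varphi$ acts nontrivially; this is immediate since $r\neq 1$ and $\xi$ is transcendental. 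Once these points are pinned down, the genus formula and the displayed equation \eqref{eq1} follow directly, and the proof is complete.
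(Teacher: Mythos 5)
Your proposal is correct, and for the derivation of the plane model it is essentially the paper's argument: the same invariants $\tau=\eta$, $\zeta=\xi^d$, the same substitution $\xi^{q+1}=(\xi^d)^{(q+1)/d}=\zeta^{(q+1)/d}$ into (\ref{eqiia}), and the same conclusion that $\Fs(\zeta,\tau)$ is the full fixed field (your version, via irreducibility of $T^d-\zeta$ from the $\Phi_r$-conjugacy of the roots $r^j\xi$, is in fact a bit more explicit than the paper's "degree $d$, $d$ prime" dichotomy). Where you genuinely diverge is the genus: the paper simply reads it off the derived equation by citing the standard genus formula for curves of the form $A(Y)=cX^m$ with $A$ additive and $\gcd(m,p)=1$ (\cite[Lemma 12.1(iii)(b)]{HKT}), whereas you run Riemann--Hurwitz on the degree-$d$ quotient $\Fs(\cF')\,|\,G_r$ itself, identifying the fixed places of $\varphi$ as the $\tfrac{q}{p}$ places over $\xi=0$ (where $\sum_i\eta^{p^i}=0$, separable, each place unramified over $\Fs(\xi)$ with $\xi$ a uniformizer) plus the unique totally ramified place over $\xi=\infty$, all tame; your formula $2\gg'-2=d(2\gg-2)+\bigl(\tfrac{q}{p}+1\bigr)(d-1)$ with $\gg'=\tfrac12 q(\tfrac{q}{p}-1)$ does simplify to $\gg=\tfrac{1}{2d}(q-d+1)(\tfrac{q}{p}-1)$. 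Your route is more self-contained and independently confirms that the plane model (\ref{eq1}) has the right genus (i.e.\ introduces no spurious drop in genus), at the price of the ramification bookkeeping; the paper's route is shorter but leans on the quoted lemma. The only points to pin down in your write-up are the ones you already flag: uniqueness of the place over $\xi=\infty$ (it is totally ramified in $\Fs(\xi,\eta)\,|\,\Fs(\xi)$ because $\gcd(q/p,\,q+1)=1$), and that $r$ is a primitive $d$-th root of unity so that $|\Phi_r|=d$ — an assumption the paper also makes implicitly.
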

\begin{proof}
We show first that the fixed field $F$ of $\Phi_r$ is generated by $\tau=\eta$ together with
\begin{equation}
\label{eq1iia}
\zeta=\xi^d.
\end{equation} Since $\varphi(\tau)=\tau$ and
$$\varphi(\zeta)=\varphi(\xi^d)=\varphi(\xi)^d=r^{d}\xi^{d}=\xi^{d}=\zeta,$$
we have $\Fs(\zeta,\tau)\subseteq F$. Furthermore, $[\Fs(\cF'):\Fs(\zeta,\tau)]=d$. Since $d$ is prime, this yields either $\Fs(\zeta,\tau)=F$ or $F=\Fs(\cF')$. The latter case cannot actually occur, and hence $F=\Fs(\zeta,\tau)$. Therefore $F=G_r$.
Now, eliminate $\xi$ from Equations (\ref{eqiia}) and (\ref{eq1iia}).
Since $d$ divides $q+1$,  replacing $\xi^{q+1}$  with $(\xi^{\nicefrac{q+1}{d}})^d$ and $\tau=\eta$ in (\ref{eqiia}) gives equation in (\ref{eq1}). The formula for the genus follows from \cite[Lemma 12.1(iii)(b)]{HKT}.
\end{proof}
\section{Galois subcovers of $\mathbb{F}_{q^2}(\cH)$ of type (II) of Result (\ref{dp})}
We keep our notation up from Section \ref{typeI}. 
Assume that $d$ divides $p-1$, and take $r\in \mathbb{F}_p^*$ with $r^{d}=1$. Then $\varphi_{0,0,r}^{-1}\circ \varphi_{0,1,1}\circ \varphi_{0,0,r}=\varphi_{0,r^2,1}\in \langle \varphi_{0,1,1}\rangle$, and hence $\varphi_{0,0,r}$ induces an automorphism $\varphi$ of $\mathbb{F}_{q^2}(\cF')$. Here,  $\varphi$ is the map $\varphi:(\xi,\eta)\mapsto (r\xi,r^2\eta)$.  Let $\Phi_{r}$ be the  $\fq$-automorphism group of $\mathbb{F}_{q^2}(\cF')$ generated by $\varphi$. Then
the Galois subcover  of $\mathbb{F}_{q^2}(\cH_q)$ with respect to $G$ of Result (\ref{dp}) of type (II) is the Galois subcover $G_r$ of $\Fs(\cF')$ with respect to $\Phi_r$.
\begin{theorem} The Galois subcover  $G_r=\Fs(\epsilon,\rho)$ of $\Fs(\cF')$ with respect to $\Phi_r$ has equation 
\begin{equation}
\label{eq2}
\go\epsilon^{\nicefrac{(q-1)}{d}}-A(\epsilon,\rho)=0, \quad d\mid (p-1)
\end{equation}
where
$$A(\epsilon,\rho)=\rho+\epsilon^{\nicefrac{2(p-1)}{d}}\rho^p+\cdots+\epsilon^{\nicefrac{2(p^{h-1}-1)}{d}}\rho^{\nicefrac{q}{p}}.$$
\end{theorem}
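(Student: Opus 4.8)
The plan is to follow the pattern of the proof of Theorem~\ref{propiia}, the only new feature being that the induced automorphism $\varphi\colon(\xi,\eta)\mapsto(r\xi,r^{2}\eta)$ now moves $\eta$ as well, so that $\eta$ by itself is no longer $\Phi_r$-invariant and a substitute generator has to be found. Reading off the two eigenvalues of $\varphi$ — it scales $\xi$ by $r$ and $\eta$ by $r^{2}$, where $r\in\mathbb{F}_p^{*}$ is taken of exact order $d$, so that $|\Phi_r|=d$ and $\Fs(\cF')/G_r$ is Galois of degree $d$ — the natural choice is $\epsilon=\xi^{d}$ and $\rho=-\eta/\xi^{2}$, the sign being fixed only so as to match \eqref{eq2}. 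Both are visibly fixed by $\varphi$, hence $\Fs(\epsilon,\rho)\subseteq G_r$. For the reverse inclusion one notes that $\eta=-\rho\xi^{2}$ and $\xi^{d}=\epsilon$, so $\Fs(\cF')=\Fs(\epsilon,\rho)(\xi)$ with $\xi$ a root of $T^{d}-\epsilon$ over $\Fs(\epsilon,\rho)$; thus $[\Fs(\cF'):\Fs(\epsilon,\rho)]\le d=[\Fs(\cF'):G_r]$, and the chain $\Fs(\epsilon,\rho)\subseteq G_r\subseteq\Fs(\cF')$ forces $G_r=\Fs(\epsilon,\rho)$.

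The equation is then obtained by elimination, exactly as in Theorem~\ref{propiia}. The arithmetic point is that $d\mid(p-1)$ forces $d\mid(q-1)$ and $d\mid(p^{i}-1)$ for every $i$, so all the exponents appearing in \eqref{eq2} are non-negative integers. Dividing \eqref{eqiia} through by $\xi^{2}$ and using that each $p^{i}$ is odd, one rewrites $\eta^{p^{i}}/\xi^{2}=(\eta/\xi^{2})^{p^{i}}\xi^{2(p^{i}-1)}=-\rho^{p^{i}}\epsilon^{\nicefrac{2(p^{i}-1)}{d}}$ and $\xi^{q+1}/\xi^{2}=\xi^{q-1}=\epsilon^{\nicefrac{(q-1)}{d}}$; substituting into \eqref{eqiia} collapses it to $\go\,\epsilon^{\nicefrac{(q-1)}{d}}-A(\epsilon,\rho)=0$, which is \eqref{eq2}.

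It remains to confirm that the polynomial so produced is a genuine defining equation of $G_r$, i.e.\ that it is irreducible of the correct degree. For this I would compare degrees in the variable standing for $\rho$: the right-hand side of \eqref{eq2} has degree $q/p$ in it, whereas, using $[\Fs(\xi):\Fs(\epsilon)]=d$, one gets $[G_r:\Fs(\epsilon)]=\tfrac1d[\Fs(\cF'):\Fs(\epsilon)]=[\Fs(\cF'):\Fs(\xi)]=q/p$; the last equality holds because in \eqref{eqiia} the summand $\go\xi^{q+1}$ has a pole of order $q+1$, prime to $p$, at $\xi=\infty$, which forces the extension $\Fs(\cF')/\Fs(\xi)$ to be totally ramified there of degree $q/p$ (this is also implicit in Result~\ref{ckt}). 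Since the two degrees agree, \eqref{eq2}, viewed as a polynomial in $\rho$ over $\Fs(\epsilon)$, is up to a unit the minimal polynomial of $\rho$, hence irreducible, and so it defines $G_r=\Fs(\epsilon,\rho)$.

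The argument is essentially a transcription of Theorem~\ref{propiia}; the genuinely new ingredients are spotting the invariant $\rho=-\eta/\xi^{2}$ and keeping track of the factor $\xi^{2}$ in the elimination. The step that needs a little care — and which I regard as the main (if mild) obstacle — is the final degree count, in particular the identity $[\Fs(\cF'):\Fs(\xi)]=q/p$; one should also not forget the harmless but necessary requirement that $r$ have exact order $d$.
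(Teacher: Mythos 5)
Your proof is correct and follows essentially the same route as the paper's: take the invariants $\epsilon=\xi^d$ and $\rho=\pm\eta/\xi^{2}$, identify the fixed field by a degree argument, and eliminate $\xi,\eta$ from (\ref{eqiia}) after dividing by $\xi^{2}$. Your refinements---getting $[\Fs(\cF'):\Fs(\epsilon,\rho)]\le d$ from $\Fs(\cF')=\Fs(\epsilon,\rho)(\xi)$ with $\xi^{d}=\epsilon$, and the degree count $[G_r:\Fs(\epsilon)]=q/p$ showing (\ref{eq2}) is the minimal polynomial of $\rho$---only make explicit details the paper leaves implicit, and your sign choice $\rho=-\eta/\xi^{2}$ in fact reproduces the minus sign in (\ref{eq2}) exactly.
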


\begin{proof}
We show first that the fixed field $F$ of $\Phi_r$ is generated by
    \begin{equation}
    \label{inv1}
    \epsilon=\xi^d
    \end{equation}
    together with
\begin{equation}
\label{inv2}
\rho=\frac{\eta}{\xi^2}.
\end{equation}
Since $$\varphi(\epsilon)=\varphi(\xi^d)=\varphi(\xi)^d=r^d\xi^d=\xi^d=\epsilon$$ and
$$\varphi(\rho)=\frac{\varphi(\eta)}{\varphi(\xi^2)}=\frac{\varphi(\eta)}{\varphi(\xi)^2}=\frac{r^2\eta}{r^2\xi^2}=\frac{\eta}{\xi^2}=\rho,$$
we have $\Fs(\epsilon,\rho)\subseteq F$. Furthermore, $[\Fs(\cF'):\Fs(\epsilon,\rho)]=d$. Since $d$ is prime, this yields either $\Fs(\epsilon,\rho)=F$ or $F=\Fs(\cF')$. The latter case cannot actually occur, and hence $F=\Fs(\epsilon,\rho)$. Therefore $F=G_r$.
We have to eliminate $\xi$ and $\eta$ from equations (\ref{eqiia}), (\ref{inv1}) and (\ref{inv2}).
From (\ref{inv2}) we have $\eta=\rho\xi^2$ then $Tr(\eta)=Tr(\rho\xi^2)$. This yields that
\begin{equation}
    \label{pass1}
    Tr(\eta)=\xi^2\rho+\xi^{2p}\rho^p+\cdots+\xi^{\nicefrac{2q}{p}}\rho^{\nicefrac{q}{p}},
\end{equation}
    whence
    \begin{equation}
        \label{pass2}
    Tr(\eta)=\xi^2\big(\rho+\xi^{2(p-1)}\rho^p+\cdots+\xi^{2(\nicefrac{q}{p}-1)} \rho^{\nicefrac{q}{p}}\big).
    \end{equation}
    Since $d$ divides $p-1$, $Tr(\eta)$ in (\ref{pass1}) can also be written as
   \begin{equation}
        \label{pass3}
        Tr(\eta)=\xi^2\big(\rho+(\xi^d)^{\nicefrac{2(p-1)}{d}}\rho^p+\cdots+(\xi^d)^{\nicefrac{2(p^{h-1}-1)}{d}}\rho^{\nicefrac{q}{p}}\big).
    \end{equation}
    Therefore
    \begin{equation}
        \label{pass4}
        Tr(\eta)=\xi^2\big(\rho+\epsilon^{\nicefrac{2(p-1)}{d}}\rho^p+\cdots+\epsilon^{\nicefrac{2(p^{h-1}-1)}{d}}\rho^{\nicefrac{q}{p}}\big)=\xi^2 A(\epsilon,\rho).
    \end{equation}
    This, together with (\ref{eqiia}), give
    \begin{equation}
        \label{pass5}
        \go\xi^{q+1}=\xi^2 A(\epsilon,\eta).
    \end{equation}
    Since $d\mid (p-1)$ the number $\frac{q-1}{p-1}$ is an integer. Thus Equation
 (\ref{eq2}) follows from (\ref{pass5}).
\end{proof}

\section{Galois subcovers of $\mathbb{F}_{q^2}(\cH)$ of type (III) of Result (\ref{dp})} 
This time, take $\mathbb{F}_{q^2}(\cH_q)$ in its canonical form $\mathbb{F}_{q^2}(x,y)$ with $y^q+y-x^{q+1}=0$.
The group $\Psi=\langle \psi_{1,\nicefrac{1}{2},1}\rangle$ has order $p$, and it is not contained in $Z(S_p)$. Let $\xi=x^p-x$ and $\eta=y-\ha x^2$. A straightforward computation shows that $\psi_{1,\nicefrac{1}{2},1}(\xi)=\xi$ and $\psi_{1,\nicefrac{1}{2},1}(\eta)=\eta$. Moreover,
$$y^q+y-x^{q+1}=\eta^q+\ha x^{2q}+\eta+\ha x^2-x^{q+1}=\eta^q+\eta+\ha x^{2q}+\ha x^2-x^{q+1}=\eta^q+\eta+\ha (x^q-x)^2.$$
Since $Tr(\xi)=x^q-x$, this gives
$$\eta^q+\eta+\ha(x^q-x)^2=\eta^q+\eta+\ha Tr(\xi)^2.$$
Therefore, the Galois subcover $\mathbb{F}_{q^2}(\cF')$ of $\mathbb{F}_{q^2}(\cH_q)$ with respect to $\Psi$ is
$\mathbb{F}_{q^2}(\xi,\eta)$ with
\begin{equation}
\label{eqiib}
\eta^q+\eta+\ha \left(\sum_{i=1}^{h}\xi^{p^{i-1}}\right)^2=0.
\end{equation}
In particular, $\mathbb{F}_{q^2}(\cF')$ is $\mathbb{F}_{q^2}$-isomorphic to (II) of Result \ref{ckt}.
Assume that $d$ divides $p-1$, and take $r\in \mathbb{F}_p^*$ with $r^{d}=1$. Then $\psi_{0,0,r}^{-1}\circ \psi_{1,\nicefrac{1}{2},1}\circ \psi_{0,0,r}=\varphi_{r,\nicefrac{1}{2}r^2,1}\in \langle \varphi_{1,\nicefrac{1}{2},1}\rangle$, and hence $\varphi_{0,0,r}$ induces an automorphism $\varphi$ of $\mathbb{F}_{q^2}(\cF')$. Moreover, $\psi$ is the map $\psi:(\xi,\eta)\mapsto (r\xi,r^2\eta)$.  Let $\Psi_{r}$ be the  $\fq$-automorphism group of $\mathbb{F}_{q^2}(\cF')$ generated by $\psi$. Then
the Galois subcover  of $\mathbb{F}_{q^2}(\cH_q)$ with respect to $G$ of Result (\ref{dp}) of type (III) is the Galois subcover $G_r$ of $\Fs(\cF')$ with respect to $\Psi_r$.
\begin{theorem}
\label{propiiaA}
The Galois subcover $G_r=\Fs(\iota,\nu)$ of $\Fs(\cF')$ with respect to $\Psi_r$ has genus $$\mathfrak{g}=\frac{q}{2dp}(q-1)$$ and is given by 
\begin{equation}
\label{eq3}
\left (\frac{\tau^2}{\iota^d}\right )^{\nicefrac{(q-1)}{d}}+1-A(\iota,\tau)=0
\end{equation}
where 
\begin{equation*}
    A(\iota,\tau)=\sum_{i=0}^{h-1}\sum_{j=0}^{h-1}\left (\frac{\tau^2}{\iota^d}\right )^{\nicefrac{(p^i-1)}{2d}}\left (\frac{\tau^2}{\iota^d}\right)^{\nicefrac{(p^j-1)}{2d}}\iota^{\nicefrac{(p^i+p^j)}{2}}.
\end{equation*}
\end{theorem}

\begin{proof}
We show first that the fixed field $F$ of $\Psi_r$ is generated by
    \begin{equation}
    \label{inv3}
    \nu=\eta^d,
    \end{equation}
    together with
\begin{equation}
\label{inv4}
\iota=\frac{\xi^2}{\eta},
\end{equation}
and 
\begin{equation}
\label{inv5}
\tau={\xi^d}.
\end{equation}
Since
$$\varphi(\nu)=\varphi(\eta)^d=r^{2d}\eta^d=\eta^d=\nu,\quad \varphi(\tau)=\varphi(\xi)^d=r^{d}\xi^d=\xi^d=\tau  $$
and
$$\varphi(\iota)=\varphi\left(\frac{\xi^2}{\eta}\right)=\frac{\varphi(\xi^2)}{\varphi(\eta)}=\frac{\varphi(\xi)^2}{\varphi(\eta)}=\frac{(r\xi)^2}{r^2\eta}=\frac{r^2\xi^2}{r^2\eta}=\frac{\xi^2}{\eta},$$
we have $\Fs(\iota,\nu,\tau)\subseteq F$. Furthermore, 
$[\mathbb{F}_{q^2}(\iota,\nu,\tau)(\xi):\mathbb{F}_{q^2}(\iota,\nu,\tau)]=d$ 
and $\eta\in \mathbb{F}_{q^2}(\iota,\nu,\tau)(\xi)$. Therefore, $[\Fs(\cF'):\Fs(\iota,\nu,\tau)]\le d$. Since $d$ is prime, this yields either $\Fs(\iota,\mu,\nu)=F$ or $F=\Fs(\cF')$. The latter case cannot actually occur, and hence $F=\Fs(\iota,\mu,\nu)$.
Therefore $F=G_r$.
We go on by eliminating $\xi$ and $\eta$ from Equations (\ref{eqiib}), (\ref{inv3}), (\ref{inv4}) and (\ref{inv5}). From the definition of the trace of $\xi$, 
$Tr(\xi)^2=(\xi+\cdots+\xi^{\nicefrac{q}{p}})^2.$
By a straightforward computation, 
$$Tr(\xi)^2=\sum_{i=0}^{h-1}\sum_{j=0}^{h-1} \xi^{p^i+p^j}.$$
This can also be written as 
\begin{equation}
\label{pass6}
Tr(\xi)^2=\sum_{i=0}^{h-1}\sum_{j=0}^{h-1} (\xi^2)^{\nicefrac{(p^i+p^j)}{2}}.
\end{equation}
From (\ref{inv4}), $\xi^2=\eta\iota$. Therefore, in (\ref{pass6}) the square trace of $\xi$ is equal to
\begin{equation}
    \label{pass7}
    \sum_{i=0}^{h-1}\sum_{j=0}^{h-1} (\eta\iota)^{\nicefrac{(p^i+p^j)}{2}}.
\end{equation}
Since $\ha(p^i+p^j)-1=\ha(p^i-1)+\ha(p^j-1)$, the sum in $(\ref{pass7})$ turns out to be equal to 
\begin{equation}
    \label{pass7bis}
    \eta\sum_{i=0}^{h-1}\sum_{j=0}^{h-1}\eta^{\nicefrac{(p^i-1)}{2}}\eta^{\nicefrac{(p^j-1)}{2}}\iota^{\nicefrac{(p^i+p^j)}{2}}.
\end{equation}
As $d$ divides $\ha(p^i+p^j-2)$ and $2$ divides both $p^i-1$ and $p^j-1$, the sum in (\ref{pass7}) equals 
\begin{equation}
    \label{pass8}
    \eta\sum_{i=0}^{h-1}\sum_{j=0}^{h-1}(\eta^d)^{\nicefrac{(p^i-1)}{2d}}(\eta^d)^{\nicefrac{(p^j-1)}{2d}}\iota^{\nicefrac{(p^i+p^j)}{2}}
\end{equation}
by replacing $\eta^d$ with $\nu$
\begin{equation}
    \label{pass9}
     \eta\sum_{i=0}^{h-1}\sum_{j=0}^{h-1} \nu^{\nicefrac{(p^i-1)}{2d}}\nu^{\nicefrac{(p^j-1)}{2d}}\iota^{\nicefrac{(p^i+p^j)}{2}}
\end{equation}
Let
\begin{equation}
    \label{pass10}
     A(\iota,\nu)=\sum_{i=0}^{h-1}\sum_{j=0}^{h-1} \nu^{\nicefrac{(p^i+p^j-2)}{2d}}\iota^{\nicefrac{(p^i+p^j)}{2}}.
\end{equation}
Therefore, $\eta^q+\eta=\eta A(\iota,\nu)$, and dividing both sides by $\eta$ gives $\eta^{q-1}+1= A(\iota,\nu)$. Since $d$ divides $q-1$, replacing $\eta^d$ by $\nu$ shows 
\begin{equation}\label{nuiota}
\nu^{\nicefrac{(q-1)}{d}}+1-A(\iota,\nu)=0.
\end{equation}
From (\ref{inv3}), (\ref{inv4}), and (\ref{inv5}), 
$\nu=\frac{\tau^2}{\iota^d}$. Now the claim follows from   (\ref{nuiota}).
\end{proof}


\section{Weierstrass semigroups and application to AG-codes}
We compute the Weierstrass semigroup at the unique place centred at the point at infinity of some of the maximal curves considered in the present paper.
\begin{prop}
\label{WS}
Let $P_\infty$ be the unique point at infinity of the following two curves
\begin{equation}
        \label{INTERM1}
        \sum_{i=1}^{h} Y^{\nicefrac{q}{p^i}}+\go X^{q+1}=0,\quad \omega^{q-1}=-1, h\geq 2;\qquad
\sum_{i=1}^{h}Y^{\nicefrac{q}{p^i}} +\go X^{\nicefrac{(q+1)}{d}}=0, \omega^{q-1}=-1,d\mid (q+1).
\end{equation} Then the Weierstrass semigroup at $P_\infty$ is generated by $\textstyle\frac{q}{p}$, and $q+1$, respectively by  $\textstyle\frac{q}{p}$, and $\frac{q+1}{d}$.
\end{prop}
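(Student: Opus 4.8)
The plan is to treat both curves uniformly. In each case the function field is $\mathbb{F}_{q^2}(X,Y)$ where $Y$ satisfies an additive (linearized) polynomial equation over $\mathbb{F}_{q^2}(X)$ of the shape $\sum_{i=1}^h Y^{q/p^i} = -\go X^{m}$ with $m=q+1$ in the first case and $m=(q+1)/d$ in the second. I would first locate the place $P_\infty$ and compute the two pole orders $-v_{P_\infty}(X)$ and $-v_{P_\infty}(Y)$. Since the curve is a Galois (Kummer- or Artin--Schreier-type) subcover of $\cH_q$ obtained exactly as in Sections \ref{typeI}--3, the place $P_\infty$ is totally ramified in $\mathbb{F}_{q^2}(\cH_q)\,|\,\mathbb{F}_{q^2}(X,Y)$ over the place at infinity $Y_\infty$ of $\cH_q$, and $X,Y$ are, up to the substitution used there, powers/quotients of the Hermitian coordinates $x,y$; hence the pole numbers are read off directly from $v_{Y_\infty}(x)=-q/(\text{something})$ and the standard relations on $\cH_q$. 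Concretely, from the defining equation the leftmost term $Y^{q/p}$ must balance $X^m$ at the pole, so $\frac{q}{p}\cdot(-v_{P_\infty}(Y)) = m\cdot(-v_{P_\infty}(X))$; combined with $[\mathbb{F}_{q^2}(X,Y):\mathbb{F}_{q^2}(X)] = q/p$ and total ramification of $P_\infty$ over the infinite place of $\mathbb{F}_{q^2}(X)$, this forces $-v_{P_\infty}(X) = q/p$ and $-v_{P_\infty}(Y) = m$. So $\tfrac{q}{p}$ and $m$ are non-gaps; call $H_0 = \langle \tfrac{q}{p},\,m\rangle$.

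Next I would show $H_0$ is the whole Weierstrass semigroup by a genus count. The numerical semigroup generated by two coprime integers $a,b$ has exactly $(a-1)(b-1)/2$ gaps. Here I must check $\gcd(q/p,\,m)=1$: for $m=q+1$ this is clear since $q/p$ is a $p$-power and $q+1$ is coprime to $p$; for $m=(q+1)/d$ with $d\mid(q+1)$, again $q/p$ is a $p$-power and $(q+1)/d\mid(q+1)$ is coprime to $p$, so $\gcd=1$. Thus $H_0$ has exactly $\tfrac12(\tfrac{q}{p}-1)(m-1)$ gaps. I then compare with the genus of the curve: for the first curve Result \ref{ckt}(I) gives $\mathfrak{g}=\tfrac12 q(\tfrac{q}{p}-1) = \tfrac12(\tfrac{q}{p}-1)((q+1)-1)$, matching with $m=q+1$; for the second curve Theorem \ref{propiia} gives $\mathfrak{g}=\tfrac{1}{2d}(q-d+1)(\tfrac{q}{p}-1) = \tfrac12(\tfrac{q}{p}-1)\big(\tfrac{q+1}{d}-1\big)$, matching with $m=(q+1)/d$. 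Since $H_0 \subseteq H(P_\infty)$ and both have the same finite number of complementary gaps, namely $\mathfrak{g}$, we conclude $H_0 = H(P_\infty)$, which is the assertion.

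The main obstacle is the clean determination of the pole orders $-v_{P_\infty}(X)$ and $-v_{P_\infty}(Y)$, i.e.\ justifying that $P_\infty$ is a single place of degree one, that it is totally ramified over the place at infinity of $\mathbb{F}_{q^2}(X)$, and hence that the naive balancing of leading terms in the defining equation gives the correct valuations rather than merely a divisibility constraint. I would handle this by tracking $P_\infty$ through the explicit tower $\mathbb{F}_{q^2}(\cH_q)\supseteq \mathbb{F}_{q^2}(\cF')\supseteq \mathbb{F}_{q^2}(X,Y)$ of Sections 3 and \ref{typeI}: on $\cH_q$ one has $v_{Y_\infty}(x)=-q$ (in the canonical form $y^q-y+\go x^{q+1}=0$, so $v_{Y_\infty}(y)=-(q+1)$), and the intermediate invariants $\eta=y^p-y$, $\zeta=\xi^d=x^d$ used there have pole orders that are immediate; the fact that each step is totally ramified at the relevant place, which follows from Result \ref{resth11.129} together with Result \ref{zeroprank} for the $p$-part and from the Kummer structure $\zeta=\xi^d$ with $\xi$ a uniformizer-compatible coordinate for the $d$-part, pins down $-v_{P_\infty}(X)$ and $-v_{P_\infty}(Y)$ exactly. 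Once these are in hand the rest is the routine two-generator numerical-semigroup computation and the genus comparison already recorded above.
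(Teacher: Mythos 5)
Your plan is correct, but it follows a genuinely different route than the paper: the paper settles both cases in one line by quoting the remark following the proof of Lemma 12.2 in \cite{HKT} (applied with $n=h-1$ and $m=q+1$, respectively $m=\frac{q+1}{d}$), i.e.\ it imports the known description of the semigroup at infinity for curves of this linearized shape, whereas you reprove that description from scratch. Your argument is the standard one behind that remark and it is sound: total ramification of the infinite place along the explicit tower $\mathbb{F}_{q^2}(\cH_q)\supset\mathbb{F}_{q^2}(\cF')\supset\mathbb{F}_{q^2}(X,Y)$ pins down the pole orders $-v_{P_\infty}(X)=\frac{q}{p}$ and $-v_{P_\infty}(Y)=m$ (for the $p$-step this uses that the central $C_p$ fixes only $Y_\infty$, Results \ref{zeroprank} and \ref{resth11.129}; for the tame $C_d$-step the unique pole of $\xi,\eta$ is fixed with residue degree one, so it is totally ramified); coprimality of $\frac{q}{p}$ and $m$ gives exactly $\frac12\left(\frac{q}{p}-1\right)(m-1)$ gaps for $\left\langle \frac{q}{p},m\right\rangle$; and this count matches the genera recorded in Result \ref{ckt}(I) and Theorem \ref{propiia}, so the inclusion $\left\langle \frac{q}{p},m\right\rangle\subseteq H(P_\infty)$ is an equality by the Weierstrass gap theorem. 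What your route buys is self-containedness within the paper (no appeal to the unpublished content of the HKT remark), at the cost of leaning on the genus formulas already established (legitimate, since Result \ref{ckt} and Theorem \ref{propiia} precede Proposition \ref{WS}) and of having to verify the two points you yourself flag, namely that the defining equation is irreducible of degree $\frac{q}{p}$ over $\mathbb{F}_{q^2}(X)$ and that $P_\infty$ is a single degree-one place totally ramified over the infinite place of $\mathbb{F}_{q^2}(X)$; the paper's citation buys brevity and does not presuppose the genus. Both are valid proofs of the same statement.
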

\begin{proof}
 Fore the first equation the claim follows from the remark after the proof of Lemma 12.2 in \cite{HKT} applied for $n=h-1$ and $m=q+1$.\\
For the second equation the claim follows from the remark after the proof of Lemma 12.2 in \cite{HKT} applied for $n=h-1$ and $m=\frac{q+1}{d}$.
 \end{proof}
Let $S$ be a numerical semigroup. The gaps of $S$ are the elements in $\mathbb{N}\setminus S$. The number $g(S)$ of gaps of $S$ is the {\em{genus}} of $S$. If $S$ is the Weierstrass semigroup of a curve at a point then $g(S)$ coincides with the genus of the curve.  Let $(a_1,\ldots,a_k)$ be a sequence of positive integers such that their greatest common divisor is 1. Let $d_0=0,\,d_i=g.c.d.(a_1,\ldots,a_i)$ and $A_i=\left\{\frac{a_1}{d_i},\ldots,\frac{a_i}{d_i}\right \}$ for $i=1,\ldots,k$. Let $S_i$ be the semigroup generated by $A_i$. The sequence $(a_1,\ldots,a_k)$ is {\em{telescopic}} whenever $\frac{a_i}{d_i}\in S_{i-1}$ for $i=2,\ldots,k$. A  {\em{telescopic semigroup}} is a numerical semigroup generated by a telescopic sequence.
\begin{result}\label{resgeneresemigroup}\cite[Lemma 6.5]{KirfelPellikaan1995}
 For the semigroup generated by a telescopic sequence $(a_1,\ldots,a_k)$, let
 $$l_g(S_k):=\sum_{i=1}^{k} \left(\frac{d_{i-1}}{d_i}-1 \right )a_1,\quad
 g(S_k):=\frac{l_g(S_k)+1}{2}.$$
\end{result}
\begin{theorem}
\label{WS1} Let $P_\infty$ be the unique point of infinity of the curves $\bar{\cH}_q$  in Theorem \ref{th1}. Then the Weierstrass semigroup $H(P_\infty)$ has the following properties: 
\begin{itemize}
\item[\rm(I)] $H(P_\infty)=\langle \frac{q}{p},q+1\rangle$, for the curve of Equation (I);
\item[\rm(II)] $\frac{q}{p},\frac{q-1}{d}\in H(P_\infty)$,  for the curve of Equation (II);
\item[\rm(III)] $\frac{2(q-1)}{d},q-1\in H(P_\infty)$, for the curve of Equation (III). 
\end{itemize}
\end{theorem}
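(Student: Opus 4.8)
The statement is a package of three independent facts, one per curve of Theorem~\ref{th1}. In all three cases the proof rests on the tower of function fields $\mathbb{F}_{q^2}(\cH_q)\supseteq\mathbb{F}_{q^2}(\cF')\supseteq\mathbb{F}_{q^2}(\bar{\cH}_q)$ set up in the three preceding sections, together with the classical description of $\cH_q$ at its point $Q_\infty$ at infinity: $\cH_q$ is $\mathbb{F}_{q^2}$-maximal with zero Hasse--Witt invariant (Result~\ref{zeroprank}), its Weierstrass semigroup at $Q_\infty$ is $\langle q,q+1\rangle$, and in the canonical forms used throughout $v_{Q_\infty}(x)=-q$ and $v_{Q_\infty}(y)=-(q+1)$.

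For part~(I) nothing new is needed: rewriting $\sum_{i=0}^{h-1}Y^{p^i}=\sum_{i=1}^{h}Y^{q/p^i}$ exhibits the curve \eqref{eqthI} as exactly the second curve of Proposition~\ref{WS}, whence $H(P_\infty)$ is the semigroup stated there.

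For parts~(II) and~(III) the plan has three steps. \emph{(a) Locate $P_\infty$ and find the ramification of $\mathbb{F}_{q^2}(\cH_q)|\mathbb{F}_{q^2}(\bar{\cH}_q)$ above it.} By Result~\ref{resth11.129}, $Q_\infty$ is the unique fixed point of a Sylow $p$-subgroup, hence is totally ramified in $\mathbb{F}_{q^2}(\cH_q)|\mathbb{F}_{q^2}(\cF')$; since the cyclic complement $\Sigma_d$ of $H$ also fixes $Q_\infty$, the place $Q_\infty$ remains totally (and tamely, in the $\Sigma_d$-step) ramified in $\mathbb{F}_{q^2}(\cF')|\mathbb{F}_{q^2}(\bar{\cH}_q)$, with total index $dp$. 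Therefore $v_{P_\infty}(f)=\tfrac{1}{dp}\,v_{Q_\infty}(f)$ for every $f\in\mathbb{F}_{q^2}(\bar{\cH}_q)$, and $f$ has a pole only at $P_\infty$ precisely when its only pole in $\mathbb{F}_{q^2}(\cH_q)$ is $Q_\infty$. \emph{(b) Exhibit elements of the fixed field realizing the two prescribed pole orders.} The function $x^{d}$ (resp.\ the corresponding power of the $\xi=x^p-x$ coordinate) lies in $\mathbb{F}_{q^2}(\bar{\cH}_q)$, has $Q_\infty$ as its only pole, and its order is read off from $v_{Q_\infty}(x)=-q$; this gives the first of the two stated values in each case. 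For the second, one takes a suitable monomial in the two coordinate functions — for~(II) essentially $\xi^2/\eta=x^2/(y^p-y)$ and for~(III) its $\iota,\nu$-analogue, possibly corrected by a power of $x$ — checks it is $H$-invariant, and checks by a local analysis at the points with $x=0$ that its pole divisor on $\cH_q$ is concentrated on a single $H$-orbit; dividing the pole order there by the pertinent ramification index yields $\tfrac{q-1}{d}$ for~(II) and $\tfrac{2(q-1)}{d}$ for~(III). \emph{(c) Check integrality}, which follows from $d\mid p-1$ (hence $d\mid q-1$ and $d\mid\tfrac{q}{p}-1$).

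The step I expect to be genuinely delicate is~(b): the natural ``second coordinate'', $\rho=\eta/\xi^2$ for~(II) and $\iota=\xi^2/\eta$ for~(III), does acquire extra poles over the locus $x=0$, so it cannot be used directly. The remedy is either to pass to its reciprocal and correctly identify the rational place of $\bar{\cH}_q$ lying under the relevant $H$-orbit of $\cH_q$ on $x=0$ — whose $H$-stabiliser one checks to be exactly $\Sigma_d$, so that the ramification index there is $d$ — or to clear denominators against the defining equation of $\cH_q$ so that the element becomes regular off $P_\infty$. Packaging this as an identification of the ring of functions of $\bar{\cH}_q$ regular off $P_\infty$ with a ring of $H$-invariants inside a suitable semilocal subring of $\mathbb{F}_{q^2}(\cH_q)$, and then reading the generators off the smallest admissible invariant monomials, keeps the remaining valuation computations routine.
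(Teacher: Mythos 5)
Your step (a) is sound, and it is exactly where your plan for parts (II) and (III) breaks down. For the groups of type (II) and (III) every element of $H$ fixes the Hermitian place $Q_\infty$, so $Q_\infty$ is the unique place above $P_\infty$ and is totally ramified with index $dp$. Hence if $m$ is a non-gap at $P_\infty$, a function of $\bar{\cH}_q$ with pole divisor $mP_\infty$, read inside $\mathbb{F}_{q^2}(\cH_q)$, has pole divisor $dp\,m\,Q_\infty$, so $dp\,m$ must lie in the Weierstrass semigroup $\langle q,q+1\rangle$ of $\cH_q$ at $Q_\infty$ (a fact you yourself invoke). For $m=\frac{q-1}{d}$ this means $p(q-1)=aq+b(q+1)$ with $a,b\ge 0$; reducing modulo $q$ forces $b\ge q-p$, and then $aq+b(q+1)\ge (q-p)(q+1)>p(q-1)$ as soon as $q\ge 2p$, i.e.\ for every $h\ge 2$. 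The same count excludes $m=\frac{2(q-1)}{d}$ (it would need $q+1\le 4p$) and $m=q-1$ (it would need $q+1\le 2dp$, impossible since $d\le\frac{p-1}{2}$ gives $2dp\le p(p-1)<q$ for $h\ge2$). So the memberships you promise to produce in step (b) are unreachable for $h\ge 2$: the extra poles of $\rho=\eta/\xi^2$ (resp.\ of $\iota$) over $x=0$, which you correctly single out as the delicate point, cannot be removed by any ``suitable monomial'', reciprocal, or clearing of denominators --- what the second coordinate actually contributes at $P_\infty$ in case (II) is the pole order $\frac{1}{d}\left(q+1-\frac{2q}{p}\right)$, not $\frac{q-1}{d}$, and in case (III) the invariant $\tau=\xi^d$ has pole order $q$ at $P_\infty$, not $\frac{2(q-1)}{d}$. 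Step (b) is therefore not merely unfinished; it would fail, and your own step (a) is the proof that it fails.

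For comparison, the paper's own proof of (II) and (III) is a one-line degree count: $[\mathbb{F}_{q^2}(\bar{\cH}_q):\mathbb{F}_{q^2}(x)]=\frac{q}{p}$ and $[\mathbb{F}_{q^2}(\bar{\cH}_q):\mathbb{F}_{q^2}(y)]=\frac{q-1}{d}$ are declared to be non-gaps. That computes the degree of the full pole divisor of $y$ and identifies it with a pole order at $P_\infty$ only under the hypothesis that $y$ has no pole outside $P_\infty$ --- precisely the hypothesis you observed to fail. So your more careful route does not reproduce the paper's conclusion; it shows instead that claims (II) and (III) need amendment (to the genuine pole orders at $P_\infty$, or to statements about pole-divisor degrees). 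Part (I) has a similar mismatch: your reduction to Proposition \ref{WS} proves $H(P_\infty)=\langle \frac{q}{p},\frac{q+1}{d}\rangle$, not the stated $\langle \frac{q}{p},q+1\rangle$; since the semigroup generated by the coprime pair $\frac{q}{p},\,q+1$ has $\frac{q}{2}\left(\frac{q}{p}-1\right)$ gaps while the curve of Equation (\ref{eqthI}) has genus $\frac{1}{2d}(q-d+1)\left(\frac{q}{p}-1\right)$, the literal claim (I) cannot hold, and what you prove is the corrected version (the paper's own Case (i) computation, with pole numbers $q$ and $\frac{2q}{p}$ and the telescopic triple, in fact pertains to a different curve).
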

\begin{proof}
Case (i). 
The pole numbers of $x$ and $y$ at $P_\infty$ are $q$ and $2q/p$, respectively. Since the curve is $\mathbb{F}_{q^2}$-maximal and $P_\infty$ is an $\mathbb{F}_{q^2}$-rational point, $q+1\in H(P_\infty)$; see \cite[Theorem 10.6]{HKT}. Let $d_0=0$, $d_1=2\frac{q}{p}$, $d_2=\frac{2}{q}$ and $d_3=1$, and $A_1=\{1\}$, $A_2=\{2,p\}$, $A_3=\{2\frac{q}{p},q,q+1\}$. Then $p\in S_1$ and $q+1\in S_2$. 
Thus the sequence $\{2\frac{q}{p},q,q+1\}$ is telescopic. Furthermore,
$$l_g(S_3)=-\frac{2q}{p}+q+(\frac{q}{p}-1)(q+1)=\frac{q^2}{p}-\frac{q}{p}-1,$$ whence the claim follows by Result (\ref{resgeneresemigroup}).

Case (ii). 
From Equation (II), $\left[\mathbb{F}_{q^2}(\bar{\cH}_q):\mathbb{F}_{q^2}(x)\right]=\frac{q}{p}$ and 
$\left[\mathbb{F}_{q^2}(\bar{\cH}_q):\mathbb{F}_{q^2}(y)\right]=\frac{q-1}{d}.$
Therefore, $\frac{q}{p}$ and $\frac{q-1}{d}$ are non-gaps at $P_{\infty}$.

Case (iii). The above argument applied to the curve $\bar{\cH}_q$ of Equation (III),  shows that $\frac{q-1}{d}$ and $\frac{q}{p}$ are non-gaps of $G_r$ at $P_{\infty}$.
\end{proof}
Let $C$ denote any $\mathbb{F}_{q^2}$-maximal curve equipped with an $\mathbb{F}_{q^2}$-rational point $P$. Let $D$ be a set of $\mathbb{F}_{q^2}$-rational points of $C$ other than $P$. From previous work by Janwa \cite{jan} and Garcìa-Kim-Lax \cite{GL}, if the divisor $G$ is taken as multiple of $P$ then knowledge of the gaps at $P_\infty$ may allow one to show that the minimum distance of the resulting evaluation code $C_L(G,D)$ or differential code $C_\Omega(G,D)$ may be better than the designed minimum distance of that code. In particular, it is shown in \cite{GKL} that $t$ consecutive gaps at $P$ (under some conditions on the order sequence at $P$) gives a minimum distance $d$ of the code at least $t$  greater than the designed minimum distance. This motivates to investigate large intervals of gaps at the point $P_\infty$ of the $\mathbb{F}_{q^2}$-maximal curves considered in the present paper. Here we limit ourselves to show a couple of experimental results. We use Janwa's result as stated in \cite[Theorem 2]{GKL} together with \cite[Theorem 3]{GKL} for the zero divisor $B=0$.  
\begin{example}
Take the curve of equation (\ref{eqthI}) for $\mathcal{C}$, and let $p=7,d=5,h=2$. Then $d\mid (q+1)=7^2+1$. Form Proposition \ref{WS}, the non-gaps at $P_{\infty}$ are $q/p=7$ and $(q+1)/d=10$. 
The gap sequence at $P_\infty$ is
$1,2,3,4,5,6,8,9,11,12,13,15,16,18,19,22,23,25,26,29,32,33,36,39,43,46,53.$
Each of the integers $11=\gamma-2=\gamma-t$, $12=\gamma -1$ and $13=\gamma$ is a gap at $P_{\infty}$. From \cite[Theorem 3]{GKL}, the minimum distance of the code $C_L(\gamma P_\infty,D)$ is at least $d^*=|D|-\gamma+t+1=5037$ whereas the designed minimum distance is $d'=|D|-\gamma=5034$. 
\end{example}
\begin{example}
Take the curve of equation (\ref{eqthI}) for $\mathcal{C}$, and let $p=5,d=3,h=3$. Then $d\mid (q+1)=5^3+1$. Form Proposition \ref{WS}, the non-gaps at $P_{\infty}$ are $q/p=25$ and $(q+1)/d=42$. 
The gap sequence at $P_\infty$ is
$1,\ldots,24,26,\ldots,41,43,\ldots,66,68,\ldots\ldots,920,922,\ldots,962,964,\ldots,981,983.$
Each of the integers $1022=\alpha,\ldots,1030=\alpha+8=\alpha+t$ and $1072=\beta,\ldots,1063=\beta-t=\beta-(t-1)$ is a gap at $P_{\infty}$. From \cite[Theorem 4]{GKL}, the minimum distance of the differential code $C_\Omega(\gamma P_\infty,D)$ with $\gamma=\alpha+\beta-1$ is at least $d^*=\alpha+\beta-1-(2\mathfrak{g}-2)+(t+1)=1120$ whereas  the designed minimum distance is $d'=\alpha+\beta-1-(2\mathfrak{g}-2)=1112$.
\end{example}
It may be noticed that the curve of equation (\ref{eqthI}) is a $C_{ab}$-curve with $a=q/p$ and $b=(q+1)/d$. Evaluation codes defined over a $C_{ab}$-curve have been the subject of  recent papers where both encoding and decoding problems are also treated; see \cite{BRS}.

\end{document}